\newcommand{\scom}{simplicial complex}
\newcommand{\K}{\mathcal{K}}
\newcommand\norm[1]{\lVert#1\rVert}
\newcommand{\low}{\text{low}}
\newcommand{\pdgm}{\text{PD}}
\newcommand{\homdim}{q}
\newcommand{\base}{B}
\newcommand{\bp}{p}
\newcommand{\idx}{\text{idx}}
\newcommand{\tri}{\Delta}
\DeclareMathOperator*{\argmin}{arg\,min}
\newtheorem{theorem}{Theorem}[section]
\theoremstyle{definition}
\newtheorem{definition}[theorem]{Definition}
\newtheorem{lemma}[theorem]{Lemma}
\newtheorem{proposition}[theorem]{Proposition}
\title{Computing Persistence Diagram Bundles}
\author{Abigail Hickok}
\date{\today}
\begin{document}
\begin{abstract}
     Persistence diagram (PD) bundles, a generalization of vineyards, were introduced as a way to study the persistent homology of a set of filtrations parameterized by a topological space $\base$. In this paper, we present an algorithm for computing piecewise-linear PD bundles, a wide class that includes many of the PD bundles that one may encounter in practice. Full details are given for the case in which $\base$ is a triangulated surface, and we outline the generalization to higher dimensions and other cases.
\end{abstract}

\maketitle

\section{Introduction}\label{sec:intro}

In persistent homology, one is given a filtration (e.g., a filtered complex), and one studies how the homology changes as the filtration parameter increases. Here, we consider the case in which one is instead given a \emph{fibered filtration function}, which is a set $\{f_\bp : \K^\bp \to \mathbb{R}\}_{\bp \in \base}$ of filtration functions that is parameterized by some topological space $\base$ (the \emph{base space}), where $\K^\bp$ is a simplicial complex for each $\bp \in \base$. At each $\bp \in \base$, the sublevel sets of $f_\bp$ form a filtered complex. The associated \emph{persistence diagram (PD) bundle} is the space of persistence diagrams $\pdgm(f_\bp)$ as they vary with $\bp \in \base$.

PD bundles arise naturally in many circumstances. The prototypical PD bundle is a \emph{vineyard}, introduced in \cite{vineyards}, which is the case where $\base$ is an interval in $\mathbb{R}$. For example, given a time-varying point cloud, one obtains a vineyard by constructing a filtration (such as the Vietoris--Rips filtration) at each time. Figure \ref{fig:vineyard} shows an illustration of a vineyard, visualized as a continuously-varying ``stack of PDs''. More generally, however, one might have a set $\{X(\bp)\}_{\bp \in \base}$ of point clouds parameterized by an arbitrary parameter space $\base$. One obtains a PD bundle by constructing a filtration for the point cloud at each $\bp \in \base$. For example, biological aggregation models (e.g., the D'Orsogna model \cite{dorsogna}) produce time-varying point clouds whose coordinates also depend on various real-valued system parameters $\mu_1, \ldots, \mu_k$. The parameter space $\base$ in this example is a subset of $\mathbb{R}^{k+1}$. For each $(t, \mu_1, \ldots, \mu_k) \in \base$, there is a point cloud from which one can construct a filtration. Another special case is the persistent homology transform ($\base = \mathbb{S}^d$), which is used in the field of shape analysis \cite{pht}. Other concrete examples of PD bundles are given in \cite{pd_bundle}.

\begin{figure}
	\centering
	\includegraphics[width = .35\textwidth]{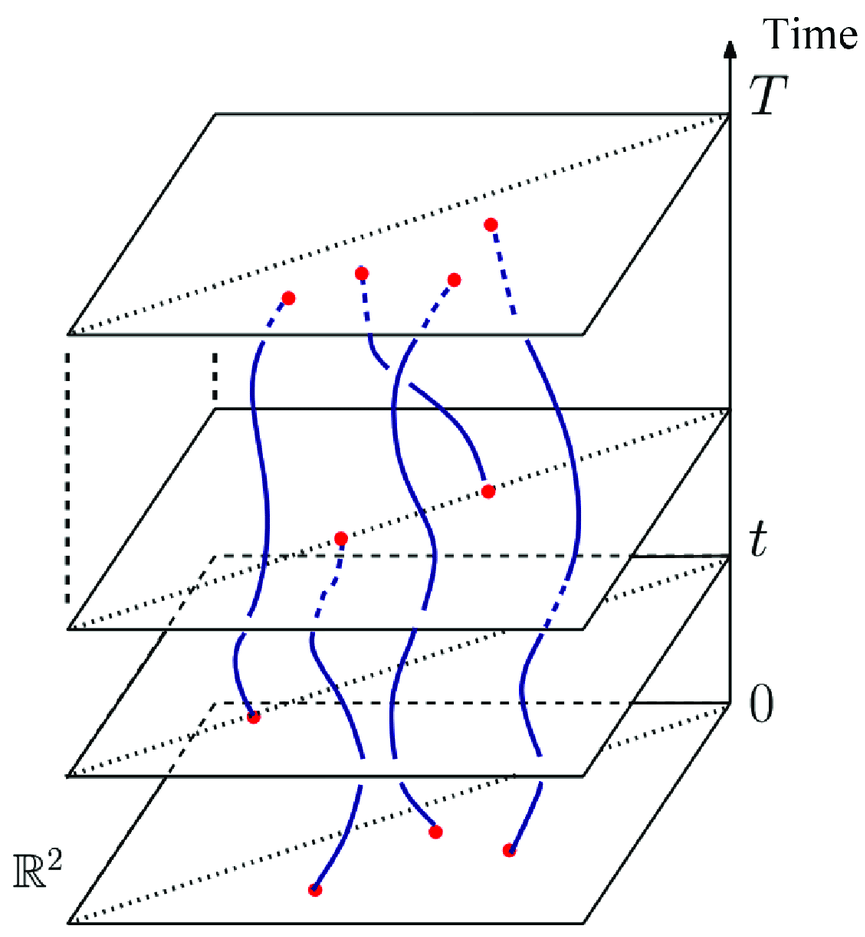}
	\caption{An illustration of a vineyard, which consists of a persistence diagram for each time $t$. (This figure is a slightly modified version of a figure that appeared originally in \cite{vineyard_figure}, which is available under a Creative Commons license.)}
	\label{fig:vineyard}
\end{figure}

\subsection{Contributions}

I generalize Cohen-Steiner et. al's algorithm for computing vineyards \cite{vineyards} to an algorithm for efficiently computing PD bundles. I restrict to the case in which the PD bundle is \emph{piecewise linear}. This means that $\base$ is a simplicial complex, $\K^\bp \equiv \K$ for all $\bp \in \base$, and for every simplex $\sigma \in \K$, the function $f_{\sigma}(\bp) := f_\bp(\sigma)$ is linear in $\bp$ on every simplex of $\base$.
The restriction to piecewise-linear PD bundles allows us to take advantage of methods in computational geometry, such as the Bentley--Ottman planesweep algorithm \cite{compgeo} for finding intersections of lines in a plane and algorithms for solving the point-location problem in a line arrangement \cite{slab}. An analogous piecewise-linear restriction was helpful for computing vineyards in \cite{vineyards}.

The idea of the algorithm is to subdivide the base space $\base$ into polyhedrons and compute a PD ``template'' for each polyhedron. The subdivision is given by \Cref{prop:polyhedrons_constant} (\cite{pd_bundle}). For any $\bp \in \base$, the persistence diagram $\pdgm(f_\bp)$ can then be computed in $\mathcal{O}(N)$ time from the PD template for the polyhedron that contains $\bp$, where $N$ is the number of simplices in $\K$. By contrast, computing PD$(f_\bp)$ from scratch takes $\mathcal{O}(N^3)$ time in the worst case.

The piecewise-linear restriction is reasonable for most applications. For example, suppose that we have a point cloud $X(t, \mu)$ whose coordinates depend on time $t$ and a parameter $\mu \in \mathbb{R}$. If the data set arises from either real-world data collection or through numerical simulation, then we likely only know the coordinates of the point cloud $X(t, \mu)$ at a discrete set $\{t_i\}$ of time steps and a discrete set $\{\mu_j\}$ of system-parameter values. For every $(t_i, \mu_j)$, there is the filtration function $f_{(t_i, \mu_j)}:\K \to \mathbb{R}$ associated with the Vietoris--Rips filtration (or any other filtration) of $X(t_i, \mu_j)$. For the Vietoris--Rips filtration, $\K$ is the simplicial complex that contains a simplex for every subset of points in the point cloud. To obtain a fibered filtration function, we define $\base$ to be a triangulation of $[\min t_i, \max t_i] \times [\min \mu_j, \max \mu_j]$ whose vertices are $\{(t_i, \mu_j)\}_{ij}$. We can extend $\{f_{(t_i, \mu_j)}\}_{ij}$ to a fibered filtration function with base space $\base$ by defining the filtration values of a simplex $\sigma$ via linear interpolation of $\{f_{(t_i, \mu_j)}(\sigma)\}_{ij}$. By construction, the resulting PD bundle is piecewise linear.

Full details are only given for the case in which $\dim(\base)$ is a triangulated surface, but the generalization to higher dimensions is discussed in Section \ref{sec:high}. When the base space $\base$ is a triangulated surface, it is already an improvement over a vineyard because it allows three parameters in total: a filtration parameter as well as two parameters that locally parameterize $\base$.

\subsection{Related Work}
PD bundles were introduced in \cite{pd_bundle} as a generalization of vineyards \cite{vineyards}. The algorithm that I present in this paper for computing PD bundles is a broad generalization of the algorithm in \cite{vineyards}. The algorithm in this paper is also related to the Rivet algorithm for computing fibered barcodes of 2D multiparameter persistence modules \cite{rivet}.

\subsection{Organization}
The paper proceeds as follows. I review the relevant background on persistent homology, vineyards, and PD bundles in Section \ref{sec:background}. I present my algorithm for computing piecewise-linear PD bundles in Section \ref{sec:compute}. Finally, I conclude and discuss possible directions for future research in Section \ref{sec:conclusion}.
\section{Background}\label{sec:background}
We begin by reviewing persistent homology, vineyards, and PD bundles; for more details on persistent homology, see \cite{edel_book, roadmap}, for more on vineyards, see \cite{vineyards}, and for an introduction to PD bundles, see \cite{pd_bundle}.

\subsection{Persistent homology}\label{sec:PH}

Let $\K$ be a simplicial complex. A \emph{filtration function} $f: \K \to \mathbb{R}$ is a real-valued function on $\K$ that is \emph{monotonic}. That is, $f(\tau) \leq f(\sigma)$ if $\tau$ is a face of $\sigma$. Monotonicity guarantees that the $r$-sublevel sets $\K_r := \{\sigma \in \K \mid f(\sigma) \leq r)\}$ are simplicial complexes.

In persistent homology, we study how the homology of $\K_r$ changes as $r$ increases. Let $\{r_i\}_{i=1}^k$ be the image of $f$, ordered such that $r_i < r_{i+1}$. These are the critical values at which $\K_r$ changes; for $r \in [r_i, r_{i+1})$, we have $\K_r = \K_{r_i}$. For every $i \leq j$, the inclusion $\iota^{i, j}: \K_{r_i} \hookrightarrow \K_{r_j}$ induces a map $\iota^{i, j}_*: H_*(\K_{r_i}, \mathbb{F}) \to H_*(\K_{r_j}, \mathbb{F})$ on homology, where $\mathbb{F}$ is a field. For the remainder of this paper, we compute homology over the field $\mathbb{F} = \mathbb{Z}/2\mathbb{Z}$. The \emph{$\homdim$th-persistent homology} (PH) is the pair 
\begin{equation*}
    \Big( \{H_\homdim(\K_{r_i}, \mathbb{F})\}_{1 \leq i \leq k}\,, \{\iota_*^{i, j}\}_{1 \leq i \leq j \leq k} \Big)\,.
\end{equation*}
A homology class is \emph{born} at $r_i$ if it is not in the image of $\iota_*^{i, i-1}$. The homology class \emph{dies} at $r_j > r_i$ if $j$ is the minimum index such that $\iota_*^{i, j}$ maps the homology class to zero. (Such a $j$ may not exist; in that case, the homology class never dies.)
The Fundamental Theorem of Persistent Homology yields compatible choices of bases for the vector spaces $H_\homdim(\K_{r_i}, \mathbb{F})$. The generators in our definition of a persistence diagram, below, are the basis elements in the decomposition given by the Fundamental Theorem of Persistent Homology.

Persistent homology is often visualized as a \emph{persistence diagram} (PD). The $\homdim$th persistence diagram $PD_\homdim(f)$ is a multiset of points in the extended plane $\overline{\mathbb{R}}^2$ that summarizes the $\homdim$th persistent homology. It contains the points on the diagonal with infinite multiplicity (for technical reasons) and one point for every generator. If a generator is born at $b$ and dies at $d$, then the coordinates of the corresponding point in the PD are $(b, d)$, and if the generator is born at $b$ and never dies, then the coordinates of the point are $(b, \infty)$. Note that off-diagonal points in a PD may also appear with multiplicity.

One of the standard methods for computing PH is the algorithm introduced in \cite{ph_alg}, which we review here. Let $f: \K \to \mathbb{R}$ be a filtration function, and let $\sigma_1, \ldots, \sigma_N$ be the simplices of $\K$, indexed such that $i < j$ if $\sigma_i$ is a proper face of $\sigma_j$. The algorithm requires a choice of \emph{compatible simplex indexing} $\idx: \K \to \{1, \ldots, N\}$, where $N$ is the number of simplices in $\K$.
\begin{definition}\label{def:spx_idx_general}
    A \emph{simplex indexing} is a bijection $\idx: \K \to \{1, \ldots, N\}$.
\end{definition}
\begin{definition}\label{def:spx_indexing}
   A \emph{compatible simplex indexing} is a simplex indexing $\idx : \K \to \{1, \ldots, N\}$ such that $\idx(\sigma_i) < \idx(\sigma_j)$ if $f(\sigma_i) < f(\sigma_j)$ or $\sigma_i$ is a proper face of $\sigma_j$.
\end{definition}
 \noindent A compatible simplex indexing exists because monotonicity ensures that $f(\sigma_i) \leq f(\sigma_j)$ if $\sigma_i$ is a face of $\sigma_j$. Because there may not be a unique compatible simplex indexing, we define the \emph{simplex indexing induced by $f$} as follows.
 \begin{definition}\label{def:idx_f}
 The \emph{simplex indexing induced by $f$} is the unique compatible simplex indexing $\idx_f: \K \to \{1, \ldots, N \}$ such that $\idx_f(\sigma_i) < \idx_f(\sigma_j)$ if either $f(\sigma_i) < f(\sigma_j)$ or if $f(\sigma_i) = f(\sigma_j)$ and $i < j$.
\end{definition}
\noindent The function $\idx_f$ is a compatible simplex indexing because the sequence $\sigma_1, \ldots, \sigma_N$ of simplices is ordered such that $i < j$ if $\sigma_i$ is a proper face of $\sigma_j$.
 
Let $D$ be the boundary matrix compatible with the simplex indexing $\idx_f$. That is, let $D$ be the matrix whose $(i, j)$th entry is
\begin{equation*}
    D_{ij} = \begin{cases}
        1\,, & \idx_f^{-1}(i) \text{ is an $(m-1)$-dimensional face of the $m$-dimensional simplex }\idx_f^{-1}(j) \\
        0 \,, & \text{otherwise.}
    \end{cases}
\end{equation*}
We decompose the boundary matrix $D$ into a matrix product $D = RU$ such that $U$ is upper triangular and $R$ is a binary matrix that is ``reduced''. A binary matrix $R$ is \emph{reduced} if $\low_R(j) \neq \low_R(j')$ whenever $j \neq j'$ are the indices of nonzero columns in $R$. The quantity $\low_R(j)$, which is called the \emph{pairing function}, is the row index of the last $1$ in column $j$ if column $j$ is nonzero and undefined if column $j$ is the zero vector. An RU decomposition can be computed in $\mathcal{O}(N^3)$ time \cite{ph_alg, edel_book}.

Cohen-Steiner et al. \cite{vineyards} showed that the pairing function $\low_R(j)$ depends only on the boundary matrix $D$ and not on the particular reduced binary matrix $R$ in the decomposition $D = RU$. A pair $(\idx_f^{-1}(i), \idx_f^{-1}(j))$ of simplices with $i = \low_R(j)$ represents a persistent homology generator. The \emph{birth simplex} $\idx_f^{-1}(i)$ creates the homology class and the \emph{death simplex} $\idx_f^{-1}(j)$ destroys the homology class. The two simplices in a pair have consecutive dimensions; that is, if dim$(\idx^{-1}(i)) = \homdim$, then dim$(\idx^{-1}(j)) = \homdim + 1$. If dim$(\idx_f^{-1}(i)) = \homdim$ and dim$(\idx_f^{-1}(j)) = \homdim + 1$, then a point with coordinates $(f(\idx_f^{-1}(i)), f(\idx_f^{-1}(j)))$ is added to the $\homdim$th persistence diagram. We refer to $f(\idx_f^{-1}(i))$ as its \emph{birth} and to $f(\idx_f^{-1}(j))$ as its \emph{death}. Some simplices are not paired. If $i \neq \low_R(j)$ for all $j$, then the simplex $\idx_f^{-1}(i)$ is a birth simplex for a homology class that never dies. Its birth is $f(\idx_f^{-1}(i))$ and its death is $\infty$. If $\dim(\idx_f^{-1}(i)) = \homdim$, then a point with coordinates $(f(\idx_f^{-1}(i)), \infty)$ is added to the $\homdim$th persistence diagram.

\subsection{Vineyards}\label{sec:vineyards}
Let $\K$ be a \scom. A \emph{$1$-parameter filtration function} on $\K$ is a function $f: \K \times I \to \mathbb{R}$, where $I = [t_0, t_1]$ is an interval in $\mathbb{R}$, such that $f(\cdot, t)$ is a filtration function on $\K$ for all $t \in I$.
For each $t \in I$, the $r$-sublevel sets $\K_r^t = \{\sigma \in \K \mid f(\sigma, t) \leq r\}$ are a filtration of $\K$. The set $\{\{\K_r^t\}_{r \in \mathbb{R}}\}_{t \in I}$ is a set of filtrations parameterized by $t \in I$. For each $t \in I$, one can compute the persistence diagram $PD(f(\cdot, t))$. The associated \emph{vineyard} is the 1-parameter set $\{PD(f(\cdot, t))\}_{t \in I}$ of persistence diagrams. We visualize the vineyard in $\overline{\mathbb{R}}^2 \times I$ as a continuous stack of PDs (see \Cref{fig:vineyard}). The points in the PDs trace out curves with time; these curves are the \emph{vines}.

An algorithm for computing vineyards is given by \cite{vineyards}, and we review it here. Let $f:\K \times I \to \mathbb{R}$ be a $1$-parameter filtration function, and let $\sigma_1, \ldots, \sigma_N$ be the simplices of $\K$, indexed such that $i < j$ if $\sigma_i$ is a proper face of $\sigma_j$. As in \Cref{sec:PH}, we fix a compatible simplex indexing $\idx_f: \K \times I \to \{1, \ldots, N\}$ such that $\idx_f(\sigma_i, t) < \idx_f(\sigma_j, t)$ if we have either $f(\sigma_i, t) < f(\sigma_j, t)$ or we have $f(\sigma_i, t) = f(\sigma_j, t)$ and $i < j$. The simplex indexing can only change at times $t_*$ at which $f(\sigma, t_*) = f(\tau, t_*)$ for some $\sigma, \tau \in \K$. At $t_*$, the indices of $\sigma$ and $\tau$ may change. (If $\sigma$, $\tau$ are the unique pair of simplices whose indices change at $t_*$, then $\sigma$ and $\tau$ are transposed in the simplex indexing and $\sigma$ and $\tau$ have consecutive indices in the indexing. Otherwise, there is a sequence of such transpositions.) Let $D(t)$ denote the boundary matrix compatible with the indexing at time $t$ and let $\low_R(\cdot, t)$ denote the corresponding pairing function. One computes $D(t_0)$ at the initial time $t_0$ and an RU decomposition $D(t_0) = R(t_0)U(t_0)$. The initial pairing function $\low_R(\cdot, t_0)$ is read off from the initial RU decomposition. Then we sweep through the intersections $t_*$ (from left to right) at which the simplex indexing changes. At each $t_*$, we update the simplex indexing, RU decomposition, and pairing function. This updating procedure yields the birth and death simplices for the filtration function $f(\cdot, t_*)$, which one can use to obtain the new persistence diagram.

The procedure above is an efficient way of computing the diagrams PD$(f(\cdot, t))$ for all $t \in [t_0, t_1]$. At worst, updating $R(t_*)$ requires adding one column to another and adding one row to another---similarly for $U(t_*)$. The addition of columns and rows is an $\mathcal{O}(N)$ operation, although in experiments, the authors of \cite{vineyards} found that updating $R(t_*)$ and $U(t_*)$ can be done in approximately constant time if one uses the sparse matrix representations that are given in \cite{vineyards}. If there is a single transposition of simplices at $t_*$, then at most two (birth, death) simplex pairs are updated, and these updates occur in $\mathcal{O}(1)$ time.

A special type of vineyard is a \emph{piecewise-linear vineyard}. If we are only given $f(\sigma, t_i)$ at discrete time steps $t_i$, then for all $i$ we extend $f(\sigma, t)$ to $t \in [t_i, t_{i+1}]$ by linear interpolation. In this case, one can compute the transposition times $t_*$ by using the Bentley--Ottman planesweep algorithm \cite{compgeo}. This is because computing when (if) two simplices $\sigma, \tau$ get transposed in $[t_i, t_{i+1}]$ is equivalent to finding the intersection (if it exists) between the line segments
\begin{align*}
	y = \frac{f(\sigma, t_{i+1}) - f(\sigma, t_i)}{t_{i+1} - t_i}(t - t_i) + f(\sigma, t_i)\,, \\
	y = \frac{f(\tau, t_{i+1}) - f(\tau, t_i)}{t_{i+1} - t_i}(t - t_i) + f(\tau, t_i)\,.
\end{align*}

\subsection{PD bundles}\label{sec:pd_bundle}
PD bundles were introduced in \cite{pd_bundle} as a generalization of vineyards in which a set of filtrations is parameterized by an arbitrary topological space $\base$. A vineyard is the special case in which $B$ is an interval in $\mathbb{R}$.
\begin{definition}\label{def:pdbundle}
A \emph{fibered filtration function} is a set $\{f_\bp: \K^\bp \to \mathbb{R}\}_{\bp \in B}$ of filtration functions parameterized by a topological space $B$ (the \emph{base space}). When $\K^\bp \equiv \K$ for all $\bp \in B$, we define $f(\sigma, \bp) := f_\bp(\sigma)$ for all simplices $\sigma \in \K$ and $\bp \in B$.
\end{definition}

\begin{definition}
Let $\{f_\bp: \K^\bp \to \mathbb{R}\}_{\bp \in B}$ be a fibered filtration function. The topological space $B$ is the \emph{base space}. The space $E := \{(\bp, z) \mid z \in PD_\homdim(f_\bp)\,, \bp \in B\}$ is the $\homdim$th \emph{total space}. We give $E$ the subspace topology inherited from the inclusion $E \hookrightarrow B \times \overline{\mathbb{R}}^2$. The associated \emph{$\homdim$th PD bundle} is the triple $(E, B, \pi)$, where $\pi$ is the projection from $E$ to $B$. 
\end{definition}

In \cite{vineyards}, it was computationally easier to work with a piecewise-linear vineyard, which is a vineyard for a fibered filtration function of the form $f: \K \times [t_0, t_1] \to \mathbb{R}$ such that $f(\sigma, \cdot)$ is piecewise linear for all $\sigma \in \K$. (See the discussion at the end of \Cref{sec:vineyards}.) Below, we define an analog of piecewise-linear vineyards.

\begin{definition}[Piecewise-linear PD bundles]
    Let $\{f_\bp : \K^\bp \to \mathbb{R}\}_{\bp \in B}$ be a fibered filtration function in which $\K^\bp \equiv \K$. As in \Cref{def:pdbundle}, we define $f(\sigma, \bp) := f_\bp(\sigma)$ for all $\sigma \in \K$ and $\bp \in B$. If $B$ is a simplicial complex and $f(\sigma, \cdot)$ is linear on each simplex of $B$ for all simplices $\sigma \in \K$, then $f$ is a \emph{piecewise-linear fibered filtration function.} The resulting PD bundle is a \emph{piecewise-linear PD bundle}.
\end{definition}
\noindent For the remainder of the paper, we only consider piecewise-linear PD bundles.

For example, in \Cref{sec:intro} we considered a point cloud $X(t, \mu)$ whose coordinates depended on time $t \in \mathbb{R}$ and parameter $\mu \in \mathbb{R}$. Given only the coordinates of the point cloud at a discrete set $\{t_i\}$ of time steps and a discrete set $\{\mu_j\}$ of parameter values, we obtained a filtration function $f_{(t_i, \mu_j)}$ for every $(t_i, \mu_j)$. We extended this to a piecewise-linear fibered filtration function on $B = [\min t_i, \max t_i] \times [\min \mu_j, \max \mu_j]$ via linear interpolation of the filtration values for each simplex $\sigma \in \mathcal{K}$.

More generally, suppose that we are given a fibered filtration function of the form $f: \K \times \prod_{i=1}^m \mathcal{I}_i \to \mathbb{R}$, where each $\mathcal{I}_i$ is a finite subset of $\mathbb{R}$, and we wish to extend $f$ to a fibered filtration function whose base is $B := \prod_{i=1}^m [\min \mathcal{I}_i, \max \mathcal{I}_i]$. First, we construct a triangulation $B$ (i.e., an $m$-dimensional \scom) of $\prod_{i=1}^m [\min \mathcal{I}_i, \max \mathcal{I}_i]$ whose set of vertices is $\prod_{i=1}^m \mathcal{I}_i$. (See \cite{cube_triangulation}, for example, for a method to triangulate a cubical complex.) We then extend $f$ to a piecewise-linear fibered filtration function $f: \K \times B \to \mathbb{R}$ by linearly interpolating $f(\sigma, \cdot)$ on each simplex $\Delta \in B$ for all simplices $\sigma \in \mathcal{K}$.

In \cite{pd_bundle}, it was shown that if $f: \K \times B \to \mathbb{R}$ is a piecewise-linear fibered filtration function on an $n$-dimensional simplicial complex $B$, then $B$ can be subdivided into $n$-dimensional polyhedra such that within each polyhedron $P$, there is a ``template'' from which $PD_\homdim(f(\cdot, \bp))$ can be computed for any $\bp \in P$. The template is a list of (birth, death) simplex pairs $(\sigma_b, \sigma_d)$.

The polyhedra are defined as follows. We define
\begin{equation}\label{eq:Idef}
    I(\sigma, \tau) := \{\bp \in B \mid f(\sigma, \bp) = f(\tau, \bp)\} \,.
\end{equation}
For every $n$-simplex $\Delta$ in $B$, the intersection $I(\sigma, \tau) \cap \Delta$ is $\emptyset$, $\Delta$, a vertex of $\Delta$, or the intersection of an $(n-1)$-dimensional hyperplane with $\Delta$. The set
\begin{equation}\label{eq:partition}
    \bigcup_{\Delta \in B} \partial \Delta \cup \Big\{\Big(I(\sigma, \tau) \cap \Delta \Big) \, \Big\vert \, \emptyset \subset \Big(I(\sigma, \tau) \cap \Delta\Big) \subset \Delta \Big\}
\end{equation}
determines the boundaries of a polyhedral decomposition of $\base$, where $\Delta$ denotes an $n$-simplex of $B$ and $\partial \Delta$ denotes the boundary of $\Delta$. Every polyhedron face is either a subset of $\partial \Delta$ or a subset of $I(\sigma, \tau) \cap \Delta$ for some simplex $\Delta$ of $B$.

As in Sections \ref{sec:PH} and \ref{sec:vineyards}, we define the \emph{simplex indexing induced by $f$} as follows. Let $\sigma_1, \ldots, \sigma_N$ be the simplices of $\K$, indexed such that $i < j$ if $\sigma_i$ is a proper face of $\sigma_j$. We define the simplex indexing function $\idx_f: \K \times B \to \{1, \ldots, N\}$ to be the unique function such that $\idx_f(\sigma_i, \bp) < \idx_f(\sigma_j, \bp)$ if we have $f(\sigma_i, \bp) < f(\sigma_j, \bp)$ or we have $f(\sigma_i, \bp) = f(\sigma_j, \bp)$ and $i < j$.

\begin{proposition}[\cite{pd_bundle}]\label{prop:polyhedrons_constant}
If $f: \K \times B \to \mathbb{R}$ is a piecewise-linear fibered filtration function, then the set in \Cref{eq:partition} subdivides $B$ into polyhedra $P$ on which the simplex indexing is constant (i.e., $\idx_f(\sigma, \cdot)\vert_P$ is constant for all $\sigma \in \K$).
\end{proposition}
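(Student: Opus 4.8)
The plan is to reduce the claim to a statement about each \emph{pair} of simplices and then exploit that $f(\sigma,\cdot)$ is affine on each simplex of $B$. For a fixed $\bp$, the map $\idx_f(\cdot,\bp)\colon \K\to\{1,\dots,N\}$ is, by \Cref{def:idx_f}, the bijection that sorts $\sigma_1,\dots,\sigma_N$ by the lexicographic key $\bigl(f(\sigma_i,\bp),\,i\bigr)$; it is therefore determined by the total order it induces on $\K$, and a total order on a finite set is in turn determined by its restriction to two-element subsets. Hence it suffices to fix a polyhedron $P$ of the subdivision and a pair $\sigma_i,\sigma_j$ with $i<j$ and to show that the relative order of $\sigma_i$ and $\sigma_j$ under $\idx_f(\cdot,\bp)$ does not change as $\bp$ ranges over $P$. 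By \Cref{def:idx_f}, for $i<j$ the simplex $\sigma_i$ precedes $\sigma_j$ exactly when $g(\bp):=f(\sigma_i,\bp)-f(\sigma_j,\bp)\le 0$, so what must be shown is that the truth value of ``$g(\bp)\le 0$'' is constant on $P$.

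Next I would localize to a single simplex of $B$. Since the set in \Cref{eq:partition} contains $\bigcup_{\Delta\in B}\partial\Delta$, every polyhedron $P$ lies inside a single simplex $\Delta$ of $B$, and on $\Delta$ the piecewise-linearity hypothesis makes $f(\sigma_i,\cdot)$ and $f(\sigma_j,\cdot)$—hence $g$—affine, with $I(\sigma_i,\sigma_j)\cap\Delta = g^{-1}(0)\cap\Delta$. I would then distinguish cases. If $g\equiv 0$ on $\Delta$, then $g(\bp)\le 0$ for all $\bp\in P$, so $\sigma_i$ precedes $\sigma_j$ throughout $P$; this is where it matters that the tie-break ``$i<j$'' in \Cref{def:idx_f} is independent of $\bp$. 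Otherwise $g$ is a nonzero affine function on $\Delta$, so $I(\sigma_i,\sigma_j)\cap\Delta$ is a proper subset of $\Delta$. If it is empty, $g$ has constant nonzero sign on the convex (hence connected) set $\Delta\supseteq P$. If it is a nonempty proper subset of $\Delta$, then it is one of the sets in \Cref{eq:partition} and thus a union of cells of the subdivision; so $\mathrm{relint}(P)$ is either disjoint from it—whence $g$ has constant nonzero sign on the connected set $\mathrm{relint}(P)$—or contained in it, whence $g\equiv 0$ on $\mathrm{relint}(P)$ and $\sigma_i$ precedes $\sigma_j$ there. In every case the truth of ``$g(\bp)\le 0$'' is constant on $\mathrm{relint}(P)$.

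Assembling these pairwise statements over all pairs $\sigma_i,\sigma_j$ shows that the total order induced on $\K$ by $\idx_f(\cdot,\bp)$ is the same for every $\bp$ in $\mathrm{relint}(P)$, so the bijection $\idx_f(\cdot,\bp)$ is itself constant there; applying this to each polyhedron of the subdivision gives the proposition.

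I expect the delicate points to be the bookkeeping around ties and around the boundary of $P$, not anything geometric. One has to check that the $\bp$-independent tie-break disposes of the degenerate case $g\equiv 0$, and—since the slice $I(\sigma_i,\sigma_j)\cap\Delta$ separates the two sides $\{g<0\}$ and $\{g>0\}$ but is itself a face of the closed polyhedron on the ``$g\ge 0$'' side—one must read ``$P$'' as a relatively open cell of the polyhedral complex (equivalently, take the subdivision as a partition of $B$ into the relative interiors of its polyhedra), since $\idx_f$ genuinely jumps between that face and the cell's interior. The underlying geometric fact, that a nonzero affine function has constant sign on any connected set disjoint from its zero locus, is immediate.
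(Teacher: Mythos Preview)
Your argument is correct, and in fact the paper does not prove this proposition at all: it is quoted verbatim from \cite{pd_bundle} and used as a black box, so there is no ``paper's own proof'' to compare against. Your route---reducing to pairwise comparisons, localizing to a single simplex $\Delta$ of $B$, and using that the affine function $g=f(\sigma_i,\cdot)-f(\sigma_j,\cdot)$ has constant sign on any connected set avoiding its zero locus $I(\sigma_i,\sigma_j)\cap\Delta$, which by construction is a union of faces of the subdivision---is exactly the natural one and is essentially how the result is established in \cite{pd_bundle}.

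Your closing remark about reading $P$ as a relatively open cell is well taken and worth keeping: as you observe, on a face contained in $I(\sigma_i,\sigma_j)$ the tie-break forces $\idx_f(\sigma_i,\cdot)<\idx_f(\sigma_j,\cdot)$, which agrees with only one of the two adjacent top-dimensional cells, so the indexing is genuinely not constant on the \emph{closed} polyhedra. The paper's later use of the proposition (e.g., in \Cref{sec:simplexpairs}, where one ``crosses an edge'' from $P_1$ to $P_2$ and applies a transposition) implicitly treats the $P$'s as open cells, consistent with your reading.
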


\noindent \Cref{prop:polyhedrons_constant} implies that within each polyhedron $P$, the set $\{(\sigma_b, \sigma_d)\}$ of (birth, death) simplex pairs for $f(\cdot, \bp)$ is constant with respect to $\bp$.
\section{Computing piecewise-linear PD bundles}\label{sec:compute}

The algorithm to compute piecewise-linear PD bundles has three main parts.
\begin{enumerate}
	\item {\bf Compute the polyhedra:} First, we compute the polyhedra on which the simplex indexing (and thus pairing function) is constant (see \Cref{prop:polyhedrons_constant}). 
	For every pair $P_1$, $P_2$ of adjacent polyhedra, the face $Q$ that they share is a subset of a set of the form $I(\sigma_{i_1}, \sigma_{j_1}) \cap \cdots \cap I(\sigma_{i_m}, \sigma_{j_m})$ for some $m$, where $I(\sigma_{i_1}, \sigma_{j_1})$ is defined by \Cref{eq:Idef}. In the ``generic case,'' defined below at the beginning of \Cref{sec:findpolygons}, we have $m = 1$. We compute and store a reference that is associated with $Q$ to the set $\{(\sigma_{i_1}, \sigma_{j_1}), \ldots, (\sigma_{i_m}, \sigma_{j_m})\}_{k=1}^m$.

\vspace{3mm}

	\item {\bf Compute the pairing function:} Within each polyhedron $P$, the set of (birth, death) simplex pairs for $f(\cdot, \bp)$ is constant with respect to $\bp \in P$. We compute the set of (birth, death) simplex pairs for each $P$. First, we choose an initial point $\bp_* \in \base$ at which we compute the simplex indexing at $\bp_*$, the boundary matrix $D(\bp_*)$, an RU decomposition $D(\bp_*) = R(\bp_*)U(\bp_*)$, and the pairing function at $\bp_*$. (This is the pairing function for the entire polyhedron that contains $\bp_*$.) These computations take $\mathcal{O}(N^3)$ time. We then traverse the polyhedra, starting with the polyhedron that contains $\bp_*$ and visiting each polyhedron at least once. As we move from one polyhedron to the next via a shared face $Q$, we use the set $\{(\sigma_{i_1}, \sigma_{j_1}), \ldots, (\sigma_{i_m}, \sigma_{j_m})\}_{k=1}^m$ computed earlier for $Q$ to update the RU decomposition and pairing function via the update rules that are used when computing vineyards (see \cite{vineyards}). For each polyhedron, we store its pairing function (i.e., the pairs $(\sigma_b, \sigma_d)$ of birth and death simplex pairs and also the unpaired simplices $\sigma_b$, which are birth simplices for homology classes that never die).

 \vspace{3mm}
 
	\item {\bf Query the PD bundle:} To see the $\homdim$th persistence diagram $\pdgm_\homdim(f(\cdot, \bp))$ associated with point $\bp \in \base$, we first locate the polyhedron $P$ that contains $\bp$. For each pair $(\sigma_b, \sigma_d)$ of simplices in the pairing function for $P$, the diagram $\pdgm_\homdim(f(\cdot, \bp))$ has a point with coordinates $(f(\sigma_b, \bp), f(\sigma_d, \bp))$ if $\dim(\sigma_b) = \homdim$. For every $\homdim$-dimensional simplex $\sigma_b$ that is unpaired in $P$, the diagram $\pdgm_\homdim(f(\cdot, \bp))$ contains the point $(f(\sigma_b, \bp), \infty)$. 
\end{enumerate}
In what follows, I elaborate on each step of the algorithm above. We focus on the case in which $\base$ is a triangulated surface.

\subsection{Special case: $\base$ is a triangulated surface}\label{sec:2D}
Let $\K$ be a \scom\ with simplices $\sigma_1, \ldots, \sigma_N$, indexed such that $i < j$ if $\sigma_i$ is a proper face of $\sigma_j$. Let $f: \K \times \base \to \mathbb{R}$ be a piecewise-linear fibered filtration function, and suppose that $\base$ is a triangulated surface. If $\tri$ is a triangle in $\base$, then $I(\sigma, \tau) \cap \tri$ is either $\emptyset$, $\tri$, a vertex of $\tri$, or a line segment whose endpoints are on $\partial \tri$. \Cref{fig:intersection} shows a few possible cases for $I(\sigma,\tau)$. The set in \Cref{eq:partition} is a set $L$ of line segments, and the polygonal subdivision induced by $L$ is a \emph{line arrangement}\footnote{Usually, a ``line arrangement'' refers to a planar subdivision that is induced by a set of lines in the plane. However, here I am using the term ``line arrangement'' slightly more generally. Every line segment in $L$ lies in some triangle $\tri$ of $\base$, and the line segments subdivide each triangle into polygons.} $\mathcal{A}(L)$. For an example of a line arrangement, see \Cref{fig:polygon partition}.

\begin{figure}
	\centering
	\subfloat[]{\includegraphics[width=.2\textwidth]{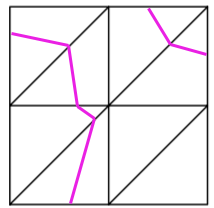}\label{fig:intersection 1}}
	\hspace{5mm}
	\subfloat[]{\includegraphics[width=.2\textwidth]{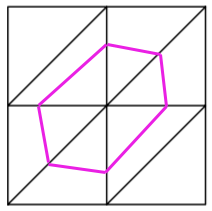}\label{fig:intersection 2}}
	\hspace{5mm}
	\subfloat[]{\includegraphics[width=.2\textwidth]{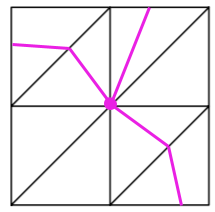}} \\
	\subfloat[]{\includegraphics[width=.2\textwidth]{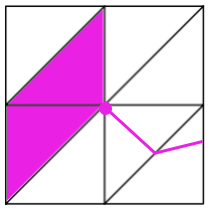}
	\label{fig:triangle_intersection}} 
	\hspace{5mm}
	\subfloat[]{\includegraphics[width=.2\textwidth]{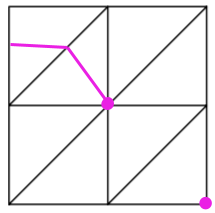}}
	\hspace{5mm}
	\subfloat[]{\includegraphics[width=.2\textwidth]{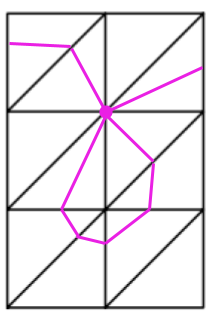}}
	\caption{A few possible cases for the set $I(\sigma,\tau)$, which is defined in \Cref{eq:Idef}. The black lines are the 1-skeleton of $\base$.}
	\label{fig:intersection}
\end{figure}

\begin{figure}
    \centering
    \includegraphics[width = .5\textwidth]{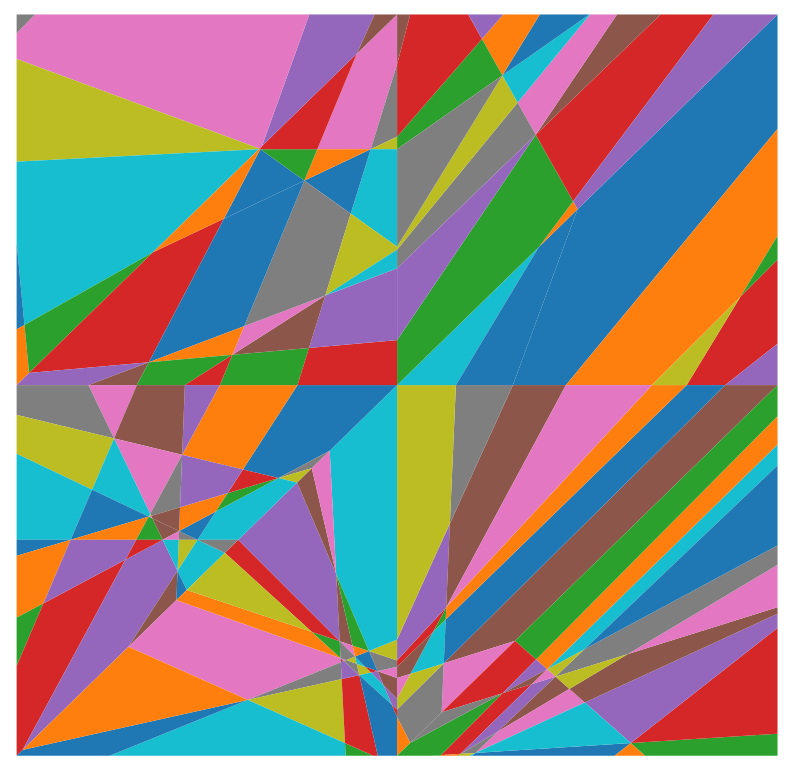}
    \caption{A line arrangement $\mathcal{A}(L)$ that represents the subdivision of a triangulated base space $\base$ into polygons (see \Cref{prop:polyhedrons_constant}). Within each polygon, the simplex indexing is constant. (This figure appeared originally in \cite{pd_bundle}.)}
    \label{fig:polygon partition}
\end{figure}

To simplify the exposition, we will make two genericity assumptions for the remainder of \Cref{sec:2D}. The idea of the algorithm is not different in the general case, but it requires some technical modifications, which we discuss in \Cref{sec:technical}. The assumptions are as follows:
\begin{enumerate}
    \item For all distinct simplices $\sigma, \tau \in \K$ and all vertices $v \in \base$, we have that $f(\sigma, v) \neq f(\tau, v)$. This implies that for all triangles $\tri \in \base$, the intersection $I(\sigma, \tau) \cap \tri$ is either $\emptyset$ or a line segment whose endpoints are not vertices of $\tri$. See the examples in Figures \ref{fig:intersection 1} and \ref{fig:intersection 2}.
    
    \item For all distinct simplices $\sigma_{i_1}, \sigma_{j_1}, \sigma_{i_2}, \sigma_{j_2} \in \K$ and every triangle $\tri \in \base$ such that $I(\sigma_{i_1}, \sigma_{j_1}) \cap \tri$ and $I(\sigma_{i_2}, \sigma_{j_2}) \cap \tri$ are nonempty, the line segments $I(\sigma_{i_1}, \sigma_{j_1}) \cap \tri$ and $ I(\sigma_{i_2}, \sigma_{j_2}) \cap \tri$ do not share any endpoints.
\end{enumerate}

Throughout \Cref{sec:2D}, we use the following notation. Let $m$ denote the number of triangles in $\base$. The numbers of vertices and edges in $\base$ are both $\mathcal{O}(m)$. Let $N$ denote the number of simplices in $\mathcal{K}$. For every triangle $\tri$ of $\base$, let $\kappa_\tri$ denote the number of line segments $\ell$ of the form $I(\sigma, \tau) \cap \tri$ and let $\kappa := \sum_{\tri \in \base} \kappa_\tri$. The worst case is $\kappa_\tri = \mathcal{O}(N^2)$ and $\kappa = \mathcal{O}(mN^2)$, but these are very crude upper bounds. Let $\mu_\tri$ denote the number of vertices of $\mathcal{A}(L)$ in the interior of triangle $\tri \in \base$; the quantity $\mu_\tri$ is equal to the number of points of the form $I(\sigma_{i_1}, \sigma_{j_1}) \cap I(\sigma_{i_2}, \sigma_{j_2}) \cap \tri$. Let $\mu$ denote the total number of vertices in $\mathcal{A}(L)$, which is $\sum_{\tri \in \base} \mu_\tri + \mathcal{O}(m + \kappa)$. In the worst case, $\mu_\tri = \mathcal{O}(\kappa_\tri^2) = \mathcal{O}(N^4)$ and $\mu = \mathcal{O}(mN^4)$, but these are again very crude upper bounds. The numbers of edges and polygons in $\mathcal{A}(L)$ are $\mathcal{O}(\mu)$.\footnote{Within each triangle $\tri$ of $\base$, the numbers of edges and polygons in $\mathcal{A}(L) \cap \tri$ are $\mathcal{O}(\mu_\tri)$ by Euler's formula, as noted in \cite{compgeo}.}


\subsubsection{Computing the polygons}\label{sec:findpolygons}
For a piecewise-linear vineyard, computing the intervals on which the simplex indexing is constant can be reduced to finding the intersections between the piecewise-linear functions $f(\sigma, t)$ and $f(\tau, t)$ for all pairs $(\sigma, \tau)$ of simplices in $\K$. Likewise, for a piecewise-linear PD bundle, computing the polygons on which the simplex indexing is constant can be reduced to finding the intersections $I(\sigma, \tau)$ for all pairs $(\sigma, \tau)$ of simplices.

We seek to compute the line arrangement $\mathcal{A}(L)$, where $L$ is the set of line segments defined by \Cref{eq:partition}. (See \Cref{fig:polygon partition}.) The polygons of $\mathcal{A}(L)$ are the polygons on which the simplex indexing is constant. We store $\mathcal{A}(L)$ using a doubly-connected edge list (DCEL) data structure, which is a standard data structure for storing a polygonal subdivision of the plane \cite{compgeo}. The DCEL data structure can be used without modification to represent $\mathcal{A}(L)$, which is a polygonal subdivision of a triangulated surface.\footnote{The primary reason to consider triangulated surfaces, rather than any simplicial complex $\base$ such that $\dim(\base) \leq 2$, is so that we can use a DCEL data structure to represent $\mathcal{A}(L)$. Otherwise, what follows in \Cref{sec:2D} works just as well for any $2$D simplicial complex.} The space complexity of $\mathcal{A}(L)$ is $\mathcal{O}(\mu)$. We compute $\mathcal{A}(L)$ using the following algorithm (illustrated in \Cref{fig:step1}):

\begin{figure}
    \centering
    \subfloat[]{\includegraphics[width = .25\textwidth]{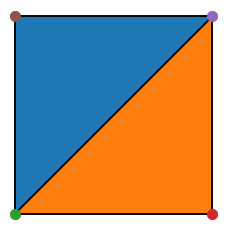}\label{fig:step1.1}}
    \subfloat[]{\includegraphics[width = .25\textwidth]{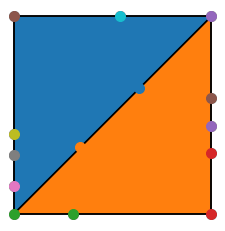}\label{fig:step1.2}} 
    \subfloat[]{\includegraphics[width = .25\textwidth]{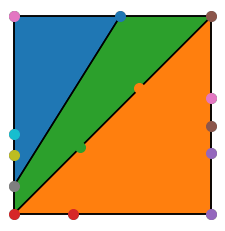}}
    \subfloat[]{\includegraphics[width = .25\textwidth]{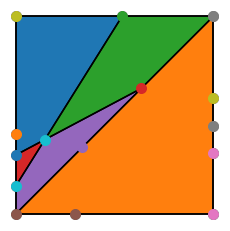}} \\
    \subfloat[]{\includegraphics[width = .25\textwidth]{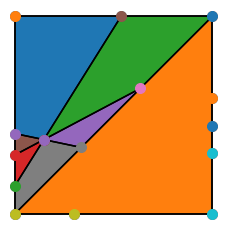}} 
    \subfloat[]{\includegraphics[width = .25\textwidth]{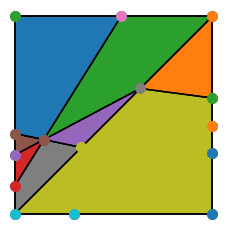}}
    \subfloat[]{\includegraphics[width = .25\textwidth]{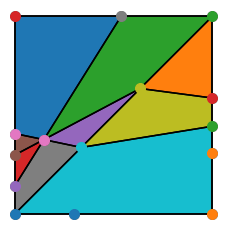}}
    \subfloat[]{\includegraphics[width = .25\textwidth]{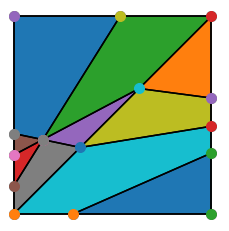}}
    \caption{Computing the polygons. (A) The line arrangement $\mathcal{A}(L)$ is initialized to represent the triangulated base space $\base$, which in this case consists of two triangles. (B) We find the vertices $v$ of $\mathcal{A}(L)$ that lie on the $1$-skeleton of $\base$. (C)--(H) For each triangle $\tri$ of $\base$, we incrementally add the line segments of the form $I(\sigma, \tau) \cap \tri$. The endpoints of a given line segment are a pair $(v, w)$ of vertices in (B). In (D), an internal vertex (a vertex at the intersection of two line segments) is created when the second line segment is added. Three of the line segments intersect at the internal vertex.}
    \label{fig:step1}
\end{figure}

\begin{enumerate}
\item We initialize $\mathcal{A}(L)$ so that it represents the triangulation $\base$. (See \Cref{fig:step1.1}.) In addition to the usual data that a DCEL stores, we enumerate the triangles in $\base$ and every half edge\footnote{An ``edge'' is a line segment in $\mathcal{A}(L)$ that connects two vertices $u$ and $v$. One can think of each edge as two \emph{half edges}, which are represented as the two oriented edges $u \to v$ and $v \to u$. If an edge is adjacent to two faces $F_1$ and $F_2$, then one of the half edges is associated with $F_1$ and the other is associated with $F_2$. One chooses an orientation (clockwise or counterclockwise) so that for every face $F$ in the DCEL, the half-edges associated with $F$ have this orientation (clockwise or counterclockwise) around the boundary of $F$.} $e$ stores the index of the triangle in $\base$ that $e$ is on the boundary of.

\vspace{3mm}

\item For every triangle $\tri \in \base$, we initialize an empty dictionary $\mathcal{D}(\tri)$.\footnote{A dictionary is a data structure for storing (key, value) pairs.} The keys will be pairs $(\sigma, \tau)$ of simplices for which $I(\sigma, \tau) \cap \tri$ is a line segment. The value of $(\sigma, \tau)$ will be a list of the endpoints $(v, w)$ of the line segment. We denote the value of $(\sigma, \tau)$ by $\mathcal{D}(\tri)[(\sigma, \tau)]$. These dictionaries use $\mathcal{O}(\kappa)$ space.

\vspace{3mm}

\item For each edge $e$ of $\base$, we compute the vertices of $\mathcal{A}(L)$ that lie on $e$. (See \Cref{fig:step1.2}.) These are the vertices that equal $I(\sigma, \tau) \cap e$ for some pair $(\sigma, \tau)$ of simplices. To do this, we consider the restriction of $f$ to $e$; the restriction $f|_e$ is a $1$-parameter filtration function (the input to a vineyard). For each $\sigma \in \K$, the set $\{(\bp, f(\sigma, \bp)) \mid \bp \in e\}$ is a line segment $\ell_{\sigma, e}$ and $I(\sigma, \tau) \cap e$ is the point $\bp \in e$ at which $\ell_{\sigma, e}$ and $\ell_{\tau, e}$ intersect. We use the Bentley--Ottman planesweep algorithm \cite{compgeo} to compute these intersections, thus obtaining the vertices $v$ of $\mathcal{A}(L)$ that lie on $e$. For a vertex $v$ that equals $I(\sigma, \tau) \cap e$, we add $v$ to the list $\mathcal{D}(\tri)[(\sigma, \tau)]$ for each triangle $\Delta$ to which $e$ is adjacent. Completing step 3 takes $\mathcal{O}(N)$ space and $\mathcal{O}((mN + \kappa) \log N)$ time in total for all edges in $\base$, and can be parallelized over the edges.\footnote{In some cases, in may be more efficient to consider the restriction of $f$ to an Euler path $\gamma$ through the $1$-skeleton of $\base$, rather than the restriction of $f$ to each edge separately. For example, if $\base$ is of the form in \Cref{fig:triangulation grid}, then an Euler path is given in \Cref{fig:triangulation path}.}

\begin{figure}
    \centering
    \subfloat[]{\includegraphics[width = .35\textwidth]{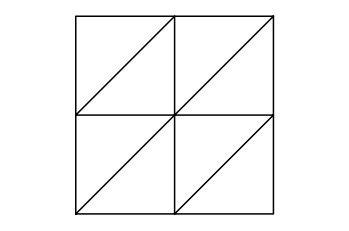}\label{fig:triangulation grid}}
    \subfloat[]{\includegraphics[width = .35\textwidth]{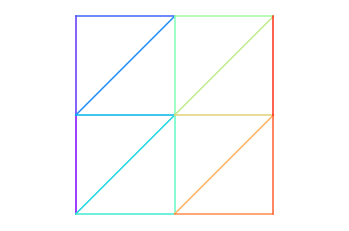}\label{fig:triangulation path}}
    \caption{(A) A triangulated base space $\base$. (B) An Euler path $\gamma$ through the $1$-skeleton of $\base$, starting at the bottom-left vertical edge (violet) and ending at the top-right vertical edge (red).}
\end{figure}

\vspace{3mm}

\item For each triangle $\tri \in \base$ and for each key $(\sigma, \tau)$ in the dictionary $\mathcal{D}(\tri)$, there is an associated pair $(v, w)$ of vertices that are the endpoints of the line segment $I(\sigma, \tau) \cap \tri$. We seek to add all of these line segments to the DCEL that represents $\mathcal{A}(L)$. (See Figures \ref{fig:step1}C--H.) There are many standard algorithms for doing this; one example is the incremental algorithm (see, e.g., Chapter 8.3 of \cite{compgeo}), in which the line segments are incrementally added one at a time. The worst-case run time of the incremental algorithm in triangle $\tri$ is $\mathcal{O}( \kappa_\tri^2)$, which yields a total run time of $\mathcal{O}(\sum_{\tri \in \base} \kappa_\tri^2)$. For better performance, this algorithm can be parallelized over the triangles $\tri \in \base$.
\end{enumerate}

\noindent In \Cref{fig:step1}, we illustrate the algorithm for computing the polygons.

As we add line segments to $\mathcal{A}(L)$, we keep track of the pairs $(\sigma, \tau)$ of simplices that correspond to each edge of $\mathcal{A}(L)$. If edge $e$ of $\mathcal{A}(L)$ is a subset of $I(\sigma, \tau)$, then $e$ stores a reference to the pair $(\sigma, \tau)$. We add the reference to $(\sigma, \tau)$ at the time that edge $e$ is created in $\mathcal{A}(L)$. If $P_1$, $P_2$ are adjacent polygons of $\mathcal{A}(L)$ that share edge $e$, then the simplex indexings in $P_1$, $P_2$ are related via the transposition of $\sigma$ and $\tau$.

\subsubsection{Computing the pairing function}\label{sec:simplexpairs}
Let $G$ be the dual graph of the line arrangement $\mathcal{A}(L)$. The graph $G$ has a vertex $v_P$ for every polygon $P$ of $\mathcal{A}(L)$ and an edge between two vertices if the corresponding polygons are adjacent. We compute a path $\Gamma$ that visits every vertex of $G$ at least once. For an example, see \Cref{fig:path}. One way to obtain such a path is via depth-first search, which takes $\mathcal{O}(\mu)$ time because the number of nodes in $G$ (i.e., polygons of $\mathcal{A}(L)$) is $\mathcal{O}(\mu)$. This yields a path $\Gamma$ whose length is $\mathcal{O}(\mu)$.

\begin{figure}
    \centering
    \includegraphics[width = .3\textwidth]{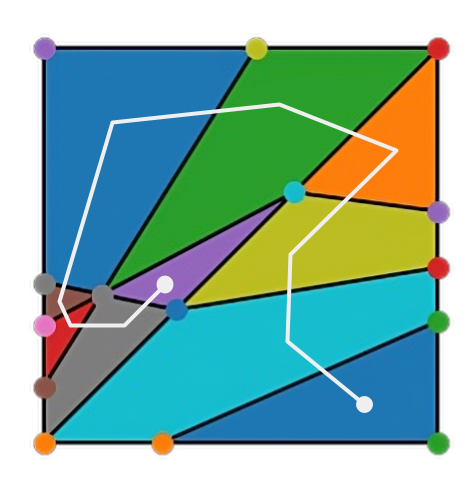}
    \caption{A path $\Gamma$ (in white) that visits every polygon in the line arrangement $\mathcal{A}(L)$.}
    \label{fig:path}
\end{figure}

At the first vertex $v_P$ of $\Gamma$, we compute the simplex indexing in polygon $P$, the RU decomposition of the boundary matrix in $P$, and the (birth, death) simplex pairs in $P$. To store the RU decomposition, we use the sparse matrix data structure from \cite{vineyards}. The polygon $P$ stores a reference to its (birth, death) simplex pairs. To store the current simplex indexing, each simplex stores a reference to its index in the current indexing (which we initialize to the indexing in $P$).

We traverse the path $\Gamma$. As we walk from one polygon $P_1$ to the next polygon $P_2$ by crossing an edge $e$ in $\mathcal{A}(L)$, we update the simplex indexing, the RU decomposition, and the (birth, death) simplex pairs. To update the simplex indexing, recall that edge $e$ stores a reference to the simplex pair $(\sigma, \tau)$ such that $e \subseteq I(\sigma, \tau)$. This implies that the simplex indexings in $P_1$ and $P_2$ are related via the transposition of $\sigma$ and $\tau$ because we must have (without loss of generality) $f(\sigma, \bp) > f(\tau, \bp)$ for $\bp \in P_1$ and $f(\sigma, \bp) < f(\tau, \bp)$ for $\bp \in P_2$, with $f(\sigma, \bp) = f(\tau, \bp)$ on the shared edge $e$. We update the simplex indexing by swapping the indices that $\sigma$ and $\tau$ store. To update the RU decomposition and the (birth, death) simplex pairs, we apply the update algorithm of \cite{vineyards}. This takes $\mathcal{O}(N)$ time in the worse case, but often approximately constant time in practice; see \cite{vineyards} and the earlier discussion in \Cref{sec:vineyards}. In $P_2$, we store the new (birth, death) simplex pairs. Storing the simplex pairs uses $\mathcal{O}(N)$ space for each polygon of $\mathcal{A}(L)$, so we use $\mathcal{O}(N\mu)$ space in total.

We can optimize the space requirements by recalling from \cite{vineyards} that most updates of the simplex indexing do not change the (birth, death) simplex pairs.\footnote{However, we note that we do not know in advance whether an update of the simplex indexing will change the (birth, death) simplex pairs.} If the update from $P_1$ to $P_2$ does not change the simplex pairs, we can delete the edge in $\mathcal{A}(L)$ that $P_1$ and $P_2$ share, thus merging $P_1$ and $P_2$ into a single polygon and reducing the size of $\mathcal{A}(L)$.

\subsubsection{Querying the PD bundle}\label{sec:query}
We consider the scenario in which a user seeks to query many points $\bp \in \base$ in real time and see the $\homdim$th persistence diagram $\pdgm_\homdim(f(\cdot, \bp))$ associated with each queried point $\bp$.

To compute the $\homdim$th persistence diagram $\pdgm_\homdim(f(\cdot, \bp))$ associated with a given $\bp$, we first identify the polygon $P$ of $\mathcal{A}(L)$ that contains $\bp$. We do this in two steps. The first step is to identify the triangle $\tri \in \base$ that contains $\bp$. This takes $\mathcal{O}(m)$ worst-case time because it takes constant time to test if a given triangle contains $\bp$. In certain cases, one can identify the triangle $\tri$ more efficiently. For example, if $\base$ is a triangulation of the form in \Cref{fig:triangulation grid}, then one can locate the triangle $\tri$ in $\mathcal{O}(1)$ time by simply examining the coordinates of $\bp$. The second step is to locate the polygon $P$ in $\tri$ that contains $\bp$. This is a well-studied problem in computational geometry; it is known as the \emph{point-location problem} \cite{handbook}. When one is planning to perform many point-location queries on the same line arrangement (i.e., if one is querying many points $\bp \in \base$), the standard strategy is to precompute a data structure so that the subsequent point-location queries can be done efficiently. There are many strategies for doing this (see, e.g., Chapter 38 in \cite{handbook}). One method is the slab-and-persistence method \cite{slab}, in which one precomputes a ``persistent search tree'' for the line arrangement.\footnote{If $\base$ is a 2D triangulated subset of $\mathbb{R}^2$, then one can also use the slab-and-persistence method to perform the first step---locating the triangle $\tri \in \base$ that contains $\bp$.} We construct a persistent search tree for each triangle in $\base$. Using separate persistent search trees for the planar subdivisions in each triangle, the slab and persistence method takes $\mathcal{O}(\sum_{\tri \in \base} \mu_\tri \log(\mu_\tri))$ preprocessing time, $\mathcal{O}(\mu)$ space, and $\mathcal{O}(\max_{\tri \in \base} \log \mu_\tri)$ time per query.

We obtain $\pdgm_\homdim(f(\cdot, \bp))$ by evaluating $f(\cdot, \bp)$ on the simplices in the pairing function for polygon $P$, which was precomputed in the previous step (see \Cref{sec:simplexpairs}). This takes $\mathcal{O}(N)$ time. For every (birth, death) pair $(\sigma_b, \sigma_d)$ of simplices, $\pdgm_\homdim(f(\cdot, \bp))$ has a point with coordinates $(f(\sigma_b, \bp), f(\sigma_d, \bp))$ if $\dim(\sigma_b) = \homdim$. For every unpaired $\homdim$-dimensional simplex $\sigma_b$, the diagram $\pdgm_\homdim(f(\cdot, \bp))$ has a point with coordinates $(f(\sigma_b, \bp), \infty)$.

\subsection{Generalizing to higher-dimensional base spaces}\label{sec:high}

The algorithm of \Cref{sec:2D} (as outlined at the beginning of \Cref{sec:compute}) does not require many modifications for higher-dimensional base spaces $\base$. We replace the subdivision of $\base$ into polygons by a subdivision of $\base$ into $n$-dimensional polyhedra, where $n = \dim(\base)$. In the third step (querying the PD bundle), one uses a point-location algorithm for a hyperplane arrangement (see, e.g., \cite{meiser, chazelle}). Only the first step (computing the polyhedra) requires a meaningful modification, which I describe below.

When $n = 2$, the intersection of $I(\sigma, \tau)$ with a triangle $\tri \in \base$ is the intersection of a line with $\tri$, which is a line segment $L_{\sigma, \tau, \tri}$. These line segments completely determine the polygonal subdivision of $\base$ because the line segments are the faces of the polygons. In turn, each line segment $L_{\sigma, \tau, \tri}$ is completely determined by the intersection of $L_{\sigma, \tau, \tri}$ with the $1$-skeleton of $\base$; this intersection is a pair $(v_{\sigma, \tau, \tri}, w_{\sigma, \tau, \tri})$ of points. In \Cref{sec:findpolygons}, we computed the set \{$(v_{\sigma, \tau, \tri}, w_{\sigma, \tau, \tri})\}_{\sigma, \tau, \tri}$ by restricting the fibered filtration function $f$ to each edge of $\base$ and applying the Bentley--Ottman planesweep algorithm.
    
In general, the intersection of $I(\sigma, \tau)$ with an $n$-simplex $\Delta \in \base$ is the intersection of an $(n-1)$-dimensional hyperplane $H_{\sigma, \tau, \Delta}$ with $\Delta$; the intersection is an $(n-1)$-dimensional polyhedron $P_{\sigma, \tau, \Delta}$. The set $\{P_{\sigma, \tau, \Delta}\}_{\sigma, \tau, \Delta}$ completely determines the polyhedral subdivision of $\base$ that is given by \Cref{prop:polyhedrons_constant} because the polyhedra $P_{\sigma, \tau, \Delta}$ are the $(n-1)$-dimensional faces of the $n$-dimensional polyhedra in the subdivision. In turn, each polyhedron $P_{\sigma, \tau, \Delta}$ is completely determined by its intersection with the edges of $\base$, as follows. The $m$-dimensional faces of $P_{\sigma, \tau, \Delta}$ are the set 
\begin{equation*}
    \{H_{\sigma, \tau, \Delta} \cap \Delta^{(m+1)} \mid \Delta^{(m+1)} \text{ is an } (m+1)\text{-dimensional face of } \Delta \text{ and } H_{\sigma, \tau, \Delta} \cap \Delta^{(m+1)} \neq \emptyset \}\,.
\end{equation*}
For every $(m+1)$-dimensional face $\Delta^{(m+1)}$ of $\Delta$ such that $H_{\sigma, \tau, \Delta} \cap \Delta^{(m+1)} \neq \emptyset$, the intersection $H_{\sigma, \tau, \Delta} \cap \Delta^{(m+1)}$ is the $m$-dimensional polyhedron whose $(m-1)$-dimensional faces are the set
\begin{equation*}
    \{H_{\sigma, \tau, \Delta} \cap \Delta^{(m)} \mid \Delta^{(m)} \text{ is an } m\text{-dimensional face of } \Delta^{(m+1)}\}\,.
\end{equation*}
By induction, the faces of $P_{\sigma, \tau, \Delta}$ are determined by 
\begin{equation*}
    \{H_{\sigma, \tau, \Delta} \cap e \mid e \text{ is a } 1 \text{-dimensional face of } \Delta \text{ (i.e., $e$ is an edge)}\}\,,
\end{equation*}
which are the vertices of $P_{\sigma, \tau, \Delta}$. Consequently, we can compute each $P_{\sigma, \tau, \Delta}$ by determining its vertices. As in the case in which $\base$ is a triangulated surface, we do this by restricting the fibered filtration function $f$ to each edge of $\base$ and applying the Bentley--Ottman planesweep algorithm.
\section{Conclusions and Discussion}\label{sec:conclusion}
I introduced an algorithm for efficiently computing persistence diagram (PD) bundles when the fibered filtration function is piecewise linear. I gave full details for the case in which the base space $\base$ is a triangulated surface. Additionally, in Section \ref{sec:high}, I discussed how one can generalize the algorithm to higher dimensions. I conclude with some questions and proposals for future work:
\begin{itemize}
    \item What invariants can we use to summarize and analyze PD bundles in ways that do not require exploratory data analysis?

    \vspace{3mm}
    
    \noindent The current algorithm asks a user to ``query'' the PD bundle at various points in the base space. This is useful for qualitative analysis or if one has a function whose input is a PD and one seeks to optimize that function over $\base$. However, other applications may require global invariants.

    \vspace{3mm} 
    
    \item How do we generalize the algorithm to fibered filtration functions that are not piecewise linear?

    \vspace{3mm}
    
    \noindent For piecewise-linear fibered filtration functions, we used the fact that the base space $\base$ can be subdivided into polyhedrons such that there is a single PD ``template'' (a list of (birth, death) simplex pairs) for each polyhedron. The template can then be used to obtain $\pdgm_\homdim(f_\bp)$ at any point $\bp$ in the polyhedron. For ``generic'' fibered filtration functions, it was shown in \cite{pd_bundle} that the base space $\base$ is stratified such that for each stratum, there is a single PD template that can be used to obtain $\pdgm_\homdim(f_\bp)$ at any point $\bp$ in the stratum.

    \vspace{3mm}
    
    \item How do we generalize to the case where $\K^{\bp}$ is not constant with respect to $\bp \in \base$?

    \vspace{3mm}
    
    \noindent In this case, simplices are added and removed from the filtration as $\bp \in \base$ varies, so the algorithm must be modified.
\end{itemize}

\section*{Acknowledgements}
I thank Michael Lesnick and Nina Otter for helpful discussions about computing PD bundles.

\appendix
\section*{Appendix}
\renewcommand{\thesection}{A} 

\subsection{Technical details of the algorithm}\label{sec:technical}
Let $\K$ be a simplicial complex with $N$ simplices, indexed $\sigma_1, \ldots, \sigma_N$ such that $i < j$ if $\sigma_i$ is a proper face of $\sigma_j$. Let $\base$ be a triangulated surface, and let $f: \K \times \base \to \mathbb{R}$ be a piecewise-linear fibered filtration function. In \Cref{sec:2D}, we made two generic assumptions to simplify the exposition. If assumption (1) holds, then every nonempty $I(\sigma_i, \sigma_j) \cap \tri$ is a line segment $\ell$ that subdivides triangle $\tri$ into polygons $Q_1$, $Q_2$ such that
\begin{equation*}
    (\idx_f(\sigma_i, \bp_1) - \idx_f(\sigma_j, \bp_1)) \times (\idx_f(\sigma_i, \bp_2) - \idx_f(\sigma_j, \bp_2)) < 0
\end{equation*}
for all $\bp_1 \in Q_1$ and $\bp_2 \in Q_2$ (i.e., $\sigma_i$ and $\sigma_j$ have different relative orders in $Q_1$ and $Q_2$). We say that $\sigma_i$ and $\sigma_j$ \emph{swap along $\ell$} because $\sigma_i$ and $\sigma_j$ have different relative orders on either side of $\ell$. If assumption (1) does not hold, then for any triangle $\tri$ in $\base$ and pair $(\sigma_i, \sigma_j)$ of simplices, it is possible that $I(\sigma_i, \sigma_j) \cap \tri$ equals either $\tri$ or an edge of $\tri$. If $e$ is an edge of triangle $\tri$ such that $e \subseteq I(\sigma_i, \sigma_j) \cap \tri$, then $\sigma_i$ and $\sigma_j$ \emph{swap along line segment $e$} if 
\begin{equation*}
    (\idx_f(\sigma_i, \bp_1) - \idx_f(\sigma_j, \bp_1)) \times (\idx_f(\sigma_i, \bp_2) - \idx_f(\sigma_j, \bp_2)) < 0
\end{equation*}
for all $\bp_1 \in \tri_1$ and $\bp_2 \in \tri_2$, where $\tri_1$ and $\tri_2$ are the triangles of $\base$ that are adjacent to $e$ (i.e., $\sigma_i$ and $\sigma_j$ have different relative orders in $\tri_1$ and $\tri_2$). In \Cref{fig:generalcase}, we illustrate an example where assumption (1) does not hold. We highlight the line segments that $\sigma_i$ and $\sigma_j$ swap along.

If assumption (2) does not hold, then there may be a line segment $\ell$ in a triangle $\tri$ such that $I(\sigma_{i_1}, \sigma_{j_1}) \cap \tri = \ell = I(\sigma_{i_2}, \sigma_{j_2}) \cap \tri$ for two distinct pairs $(\sigma_{i_1}, \sigma_{j_1})$ and $(\sigma_{i_2}, \sigma_{j_2})$ of simplices in $\K$.

\begin{figure}
    \centering
    \subfloat[]{\includegraphics[width = .3\textwidth]{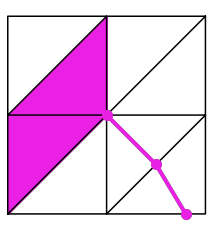}}
    \hspace{3mm}
    \subfloat[]{\includegraphics[width=.3\textwidth]{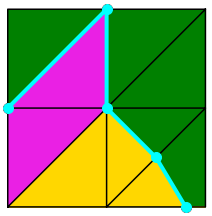}}
    \hspace{3mm}
    \subfloat[]{\includegraphics[width = .3\textwidth]{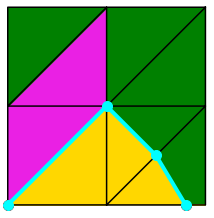}}
\caption{Examples of fibered filtration functions for which assumption (1) of \Cref{sec:2D} does not hold. (A) The pair $(\sigma_i, \sigma_j)$ is a pair of simplices such that $I(\sigma_i, \sigma_j) \cap \tri = \tri$ for every pink triangle $\tri$ and $I(\sigma_i, \sigma_j) \cap \tri = \ell$ if $\ell \subseteq \tri$ is a pink line segment. Without loss of generality $i < j$, so $\idx_f(\sigma_i, \bp) < \idx_f(\sigma_j, \bp)$ if $f(\sigma_i, \bp) = f(\sigma_j, \bp)$. (B) Suppose that $f(\sigma_j, \bp) < f(\sigma_i, \bp)$ on green triangles and $f(\sigma_i, \bp) < f(\sigma_j, \bp)$ on yellow triangles. In blue, we draw the line segments on which the pair $(\sigma_i, \sigma_j)$ swaps. (C) Suppose that $f(\sigma_i, \bp) < f(\sigma_j, \bp)$ on green triangles and $f(\sigma_j, \bp) < f(\sigma_i, \bp)$ on yellow triangles. In blue, we again draw the line segments on which the pair $(\sigma_i, \sigma_j)$ swaps.}
    \label{fig:generalcase} 

\end{figure}

In Sections \ref{sec:step1mods} and \ref{sec:step2mods}, I explain the modifications for the algorithm when we do not make the assumptions of \Cref{sec:2D}. It suffices to modify step 1 (see \Cref{sec:findpolygons}) and step 2 (see \Cref{sec:simplexpairs}). Step 3 (\Cref{sec:query}) remains the same.

\subsubsection{Preliminaries}

As in \Cref{sec:findpolygons}, we consider the restriction of $f$ to every edge $e$ of $\base$ to find the vertices of $\mathcal{A}(L)$ that lie on the $1$-skeleton of $\base$.

\begin{definition}
\noindent A vertex $v$ for a pair $(\sigma, \tau)$ of simplices is \emph{detected along edge $e$} if, while traversing edge $e$ during the Bentley--Ottman algorithm, we detect the point $v \in e$ as a point at which the relative order of $\sigma$ and $\tau$ changes.
\end{definition}
\noindent A vertex $v$ for the pair $(\sigma, \tau)$ is detected along edge $e$ if and only if $\sigma$ and $\tau$ have different relative orders at the endpoints of $e$.

\begin{definition}\label{def:edge_detection}
A line segment $(v, w)$ is \emph{detected in triangle $\tri$} if there is a pair $(\sigma, \tau)$ of simplices such that vertex $v$ is detected along an edge $e_1$ of $\tri$ for $(\sigma, \tau)$ and vertex $w$ is detected along an edge $e_2$ of $\tri$ for $(\sigma, \tau)$.
\end{definition}
\Cref{lem:linesegs} characterizes the conditions for which a pair of simplices swaps along a line segment.

\begin{lemma}\label{lem:linesegs}
Let $(v, w)$ be a line segment that is not on the boundary of $\base$.
\begin{enumerate}
\item If $v$ and $w$ are not the endpoints of an edge in $\base$, let $\tri$ be the unique triangle that contains $(v, w)$. A pair $(\sigma_i, \sigma_j)$ of simplices swaps along the line segment $(v, w)$ if and only if $(v, w)$ is detected in triangle $\tri$. 

\item If $v$ and $w$ are the endpoints of an edge in $\base$, let $\tri_1$, $\tri_2$ be the two triangles that are adjacent to that edge. A pair $(\sigma_i, \sigma_j)$ of simplices swaps along the line segment $(v, w)$ if and only if $(v, w)$ is detected in exactly one of $\tri_1$, $\tri_2$.
\end{enumerate}
\end{lemma}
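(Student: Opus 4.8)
The plan is to reduce all three notions in play---``$(\sigma_i,\sigma_j)$ swaps along a line segment,'' ``a vertex is detected along an edge,'' and ``a line segment is detected in a triangle''---to elementary statements about the sign of $g(\bp) := f(\sigma_i,\bp) - f(\sigma_j,\bp)$, and then to close the argument with a short geometric case analysis. Fix the pair $(\sigma_i,\sigma_j)$ and assume $i<j$ without loss of generality; then $I(\sigma_i,\sigma_j) = \{\bp \in \base \mid g(\bp) = 0\}$ and, by \Cref{def:idx_f}, $\idx_f(\sigma_i,\bp) < \idx_f(\sigma_j,\bp)$ iff $g(\bp) \le 0$ --- the tie-break $i<j$ places the zero set on the ``$\le 0$'' side, and tracking this is the one delicate bookkeeping point. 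The structural inputs I would use are: $g$ is affine on each triangle of $\base$, hence affine on each edge, so $I(\sigma_i,\sigma_j)\cap\tri$ is a convex subset of $\tri$ (namely $\emptyset$, a vertex, a chord, an edge of $\tri$, or all of $\tri$); a line meets $\partial\tri$ in at most two points unless it contains an edge of $\tri$; and $g$ is continuous on $\base$. The translations are then immediate: a vertex $u$ is detected along an edge $e$ for $(\sigma_i,\sigma_j)$ exactly when $g|_e \not\equiv 0$ and $g$ vanishes at a point of $e$ where the ``$\le 0$ vs.\ $>0$'' status changes, and then $u$ is that point; while ``$(\sigma_i,\sigma_j)$ swaps along $e$'' for an edge $e$ of $\base$ forces $g \equiv 0$ on $e$, by the definition recalled in \Cref{sec:technical}.

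For part (1), let $\tri$ be the triangle containing $(v,w)$. First I would observe that ``$(\sigma_i,\sigma_j)$ swaps along $(v,w)$'' means exactly that $\{g=0\}\cap\tri$ is the proper chord $(v,w)$ (so, in particular, $g\not\equiv 0$ on $\tri$), the opposite-sign condition across the chord being automatic by affineness. Forward direction: the endpoint $v$ lies on an edge $e_1$ of $\tri$ and $w$ on an edge $e_2$; since the chord is proper, $g|_{e_1}\not\equiv 0$ and $g(v)=0$, so $v$ is detected along $e_1$, and likewise $w$ along $e_2$, so $(v,w)$ is detected in $\tri$. Converse: if $v$ is detected along an edge $e_1$ of $\tri$ and $w$ along an edge $e_2$, then $g(v)=g(w)=0$, $g|_{e_1}\not\equiv 0$ (hence $g\not\equiv 0$ on $\tri$), and $e_1\neq e_2$ (else $v=w$); so the line $\{g=0\}$ meets $\partial\tri$ in the two points $v,w$ and is transverse to $e_1$ and $e_2$, hence contains no edge of $\tri$, so $\{g=0\}\cap\tri$ is exactly the chord $(v,w)$ --- that is, $(\sigma_i,\sigma_j)$ swaps along $(v,w)$.

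For part (2), write $e := (v,w)$, let $\tri_1,\tri_2$ be the two triangles containing $e$ (exactly two, since $e\notin\partial\base$ and $\base$ is a surface), and let $u_k$ be the vertex of $\tri_k$ opposite $e$. If $g\not\equiv 0$ on $e$ then $(\sigma_i,\sigma_j)$ does not swap along $e$; here at most one of $v,w$ can be a zero of $g|_e$, so the other has $g\neq 0$ at it and is therefore detected along no edge of $\tri_1$ or $\tri_2$, which makes ``$(v,w)$ detected in $\tri_k$'' fail for both $k$; so both sides are false. Now suppose $g\equiv 0$ on $e$. Then $g$ is affine on $\tri_k$ and vanishes on the edge $e$, so it has constant sign on $\mathrm{int}(\tri_k)$ equal to $\mathrm{sign}(g(u_k))$, and the relative order of $\sigma_i,\sigma_j$ in $\mathrm{int}(\tri_k)$ is ``$\sigma_i$ before $\sigma_j$'' iff $g(u_k)\le 0$; hence $(\sigma_i,\sigma_j)$ swaps along $e$ iff exactly one of $g(u_1),g(u_2)$ is positive. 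On the other hand, there is no detection along $e$ itself (since $g\equiv 0$ there), so an endpoint of $e$ can be detected along an edge of $\tri_k$ only via $vu_k$ or $wu_k$; since $g(v)=g(w)=0$, the vertex $v$ is detected along $vu_k$ iff $g(u_k)>0$ and likewise $w$ along $wu_k$ iff $g(u_k)>0$, so ``$(v,w)$ is detected in $\tri_k$'' iff $g(u_k)>0$. Combining these, $(v,w)$ is detected in exactly one of $\tri_1,\tri_2$ iff exactly one of $g(u_1),g(u_2)$ is positive iff $(\sigma_i,\sigma_j)$ swaps along $e$.

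The work is bookkeeping rather than depth, and the step I expect to be the main obstacle is handling the degenerate configurations that the genericity assumptions of \Cref{sec:2D} would have excluded: $g$ vanishing at a vertex of $\base$, the chord $(v,w)$ in part (1) meeting a vertex of $\tri$, or $I(\sigma_i,\sigma_j)\cap\tri$ degenerating to an edge of $\tri$ or to all of $\tri$. In each such case one must recheck that ``detected'' and ``swaps'' remain simultaneously true or simultaneously false, and this is exactly where the convexity of $I(\sigma_i,\sigma_j)\cap\tri$ and the tie-breaking convention do the work. Finally, to match \Cref{def:edge_detection} verbatim, which quantifies existentially over the witnessing pair, I would note that the argument above in fact shows that a pair $(\sigma,\tau)$ witnesses detection of a segment in a triangle precisely when $(\sigma,\tau)$ swaps along that segment, so the per-pair equivalence proved here is equivalent to the statement of the lemma.
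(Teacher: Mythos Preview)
Your proof is correct and follows essentially the same approach as the paper: both reduce the question to the sign of $g(\bp)=f(\sigma_i,\bp)-f(\sigma_j,\bp)$ on the triangles adjacent to the segment, with the tie-break $i<j$ placing the zero set on the ``$\sigma_i$ before $\sigma_j$'' side. The paper handles part~(2) by an explicit enumeration of five subcases (Cases 1.1, 1.2, 2.1, 2.2, 2.3) according to whether $I(\sigma_i,\sigma_j)\cap\tri_k$ is $e$ or $\tri_k$ and the sign of $g$ on each side, whereas you collapse all of these into the single criterion ``$(v,w)$ is detected in $\tri_k$ iff $g(u_k)>0$'' and ``swap iff exactly one of $g(u_1),g(u_2)$ is positive''; this is a cleaner abstraction of the same case analysis. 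Your treatment of part~(1) is also more explicit than the paper's, which simply refers back to the generic situation of \Cref{sec:2D}. One small point: your inference ``transverse to $e_1$ and $e_2$, hence contains no edge of $\tri$'' needs the additional observation that the line cannot contain the third edge $e_3$ either, which follows because $v,w$ lie in the interiors of $e_1,e_2$ under the hypothesis of part~(1); you flag exactly this kind of degeneracy at the end, so the argument closes.
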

\begin{proof}
Statement (1) was the situation in \Cref{sec:2D}, so it remains only to prove statement (2).

Suppose that $(v, w)$ are the endpoints of an edge $e$ in $\base$ that is not on the boundary of $\base$. Let $\tri_1$, $\tri_2$ be the two triangles that are adjacent to $\tri$. As illustrated in \Cref{fig:subcase_labels}, we denote the third vertex of $\tri_1$ by $u_1$, the third vertex of $\tri_2$ by $u_2$, the other two edges in $\tri_1$ by $e_2$, $e_3$, and the other two edges in $\tri_2$ by $e_4$, $e_5$.

A pair $(\sigma_i, \sigma_j)$ swaps along $(v, w)$ only if $e \subseteq I(\sigma_i, \sigma_j) \cap \tri_1 \cap \tri_2$. If $e \subseteq I(\sigma_i, \sigma_j) \cap \tri$ for triangle $\tri$, then either $I(\sigma_i, \sigma_j) \cap \tri = e$ or $I(\sigma_i, \sigma_j) \cap \tri = \tri$. If both $I(\sigma_i, \sigma_j) \cap \tri_1 = \tri_1$ and $I(\sigma_i, \sigma_j) \cap \tri_2 = \tri_2$, then $(\sigma_i, \sigma_j)$ does not swap along $(v, w)$ because $\sigma_i$ and $\sigma_j$ have the same relative order in $\tri_1$ and $\tri_2$. Therefore, the pair $(\sigma_i, \sigma_j)$ swaps along $(v, w)$ only if the intersection of $I(\sigma_i, \sigma_j)$ with one triangle is $e$ and the intersection with the other triangle is either $e$ or the entire triangle. Without loss of generality, $I(\sigma_i, \sigma_j) \cap \tri_1 = e$ and either $I(\sigma_i, \sigma_j) \cap \tri_2 = \tri_2$ or $I(\sigma_i, \sigma_j) \cap \tri_2 = e$.

Similarly, the line segment $(v, w)$ can be detected in triangle $\tri_k$ only if $e = I(\sigma_i, \sigma_j) \cap \tri_k$. If $I(\sigma_i, \sigma_j) \cap \tri_1 = e$, then we must also have $e \subseteq I(\sigma_i, \sigma_j) \cap \tri_2$, so either $I(\sigma_i, \sigma_j) \cap \tri_2 = \tri_2$ or $I(\sigma_i, \sigma_j) \cap \tri_2 = e$ (and vice versa if $I(\sigma_i, \sigma_j) \cap \tri_2 = e$). Therefore, $(v, w)$ is detected in $\tri_k$ only if $I(\sigma_i, \sigma_j) = e$ and the intersection with the other triangle is either $e$ or the whole triangle. Without loss of generality, $\tri_k = \tri_1$.

In Figures \ref{fig:subcase_1.1}--\ref{fig:subcase_2.3}, we illustrate the possible cases in which $I(\sigma_i, \sigma_j) \cap \tri_1 = e$ and either $I(\sigma_i, \sigma_j) \cap \tri_2 = \tri_2$ or $I(\sigma_i, \sigma_j) \cap \tri_2 = e$. We will show that statement (2) holds in each of these cases. In all other cases, we have already shown that neither $(\sigma_i, \sigma_j)$ swaps along the line segment $(v, w)$ nor is $(v, w)$ detected in $\tri_1$ or $\tri_2$.

\begin{figure}
    \centering
    \subfloat[]{\includegraphics[width=.3\textwidth]{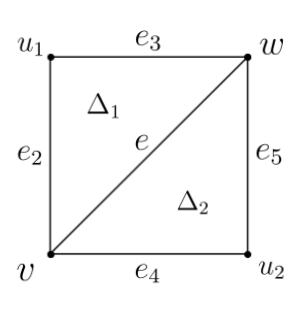}\label{fig:subcase_labels}} 
    \subfloat[Case 1.1]{\includegraphics[width=.3\textwidth]{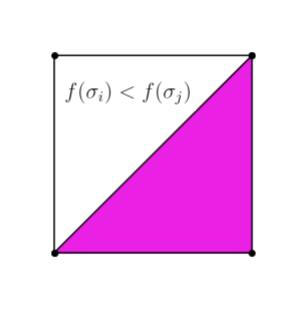}\label{fig:subcase_1.1}}
    \subfloat[Case 1.2]{\includegraphics[width=.3\textwidth]{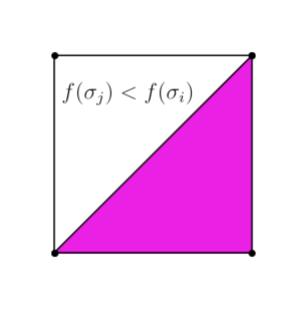}\label{fig:subcase_1.2}} \\
    \subfloat[Case 2.1]{\includegraphics[width=.3\textwidth]{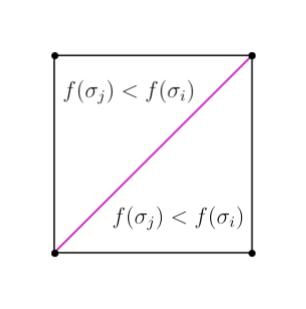}\label{fig:subcase_2.1}}
    \subfloat[Case 2.2]{\includegraphics[width=.3\textwidth]{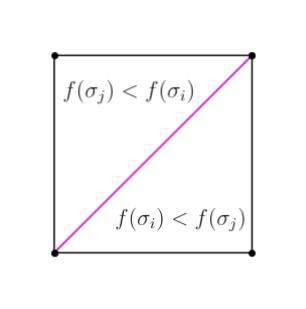}\label{fig:subcase_2.2}} \subfloat[Case 2.3]{\includegraphics[width=.3\textwidth]{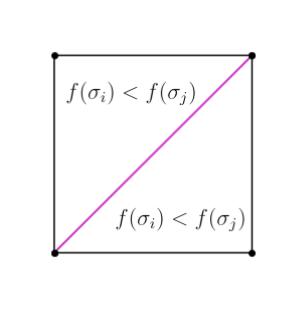}\label{fig:subcase_2.3}}
    \caption{(A) The vertices, edges, and triangles that were defined in the proof of \Cref{lem:linesegs}. (B--F) The cases in the proof of \Cref{lem:linesegs}. Pink regions are regions on which $\sigma_i$ and $\sigma_j$ have equal filtration values.}
    \label{fig:subcases}
\end{figure}

Without loss of generality, we assume that $i < j$, so $\idx_f(\sigma_i, \bp) < \idx_f(\sigma_j, \bp)$ if $\bp \in I(\sigma_i, \sigma_j)$. 

\vspace{3mm}

{\bf Case 1:} $I(\sigma_i, \sigma_j) \cap \tri_2 = \tri_2$.

\vspace{3mm}

There are two subcases.
\begin{enumerate}
\item {\bf Case 1.1: } (see \Cref{fig:subcase_1.1}) $f(\sigma_i, \bp) < f(\sigma_j, \bp)$ for all $\bp \in \tri_1 \setminus e$.

\vspace{3mm}

\noindent In this subcase, $\idx_f(\sigma_i, \bp) < \idx_f(\sigma_j, \bp)$ for all $\bp \in \tri_1 \cup \tri_2$. Therefore, the pair $(\sigma_i, \sigma_j)$ does not swap along $(v, w)$. Neither $v$ nor $w$ is detected along any edges of $\tri_1$ or $\tri_2$, so the line segment $(v, w)$ is not detected in either $\tri_1$ or $\tri_2$.
\vspace{3mm}

\item {\bf Case 1.2: } (see \Cref{fig:subcase_1.2}) $f(\sigma_j, \bp) < f(\sigma_i, \bp)$ for all $\bp \in \tri_1 \setminus e$.

\vspace{3mm}

\noindent In this subcase, $\idx_f(\sigma_i, \bp) < \idx_f(\sigma_j, \bp)$ for all $\bp \in \tri_2$ and $\idx_f(\sigma_j, \bp) < \idx_f(\sigma_i, \bp)$ for all $\bp \in \tri_1 \setminus e$. Therefore, the pair $(\sigma_i, \sigma_j)$ swaps along $(v, w)$. The vertex $v$ is detected along edge $e_2$ and the vertex $w$ is detected along the edge $e_3$. Because $e_2$ and $e_3$ are edges of $\tri_1$, the line segment $(v, w)$ is detected in $\tri_1$. The vertices $v$ and $w$ are not detected along any edge of $\tri_2$, so $(v, w)$ is not detected in $\tri_2$.
\end{enumerate}

{\bf Case 2:} $I(\sigma_i, \sigma_j) \cap \tri_2 = e$.
\vspace{3mm}

There are three subcases.
\begin{enumerate}
\item {\bf Case 2.1:} (see \Cref{fig:subcase_2.1}) $f(\sigma_j, \bp) < f(\sigma_i, \bp)$ for all $\bp \in (\tri_1 \cup \tri_2) \setminus e$.
\vspace{3mm}

\noindent In this subcase, $\idx_f(\sigma_j, \bp) < \idx_f(\sigma_i, \bp)$ for all $\bp \in (\tri_1 \cup \tri_2)\setminus e.$ Therefore, the pair $(\sigma_i, \sigma_j)$ does not swap along $(v, w)$. The vertex $w$ is detected along edges $e_3$ and $e_5$. The vertex $v$ is detected along edges $e_2$ and $e_4$. Therefore, $(v, w)$ is detected in both $\tri_1$ and $\tri_2$.

\vspace{3mm}

\item {\bf Case 2.2:} (see \Cref{fig:subcase_2.2}) Either we have $f(\sigma_j, \bp) < f(\sigma_i, \bp)$ for all $\bp \in \tri_1 \setminus e$ and $f(\sigma_i, \bp) < f(\sigma_j, \bp)$ for all $\bp \in \tri_2 \setminus e$ or we have $f(\sigma_i, \bp) < f(\sigma_j, \bp)$ for all $\bp \in \tri_1 \setminus e$ and $f(\sigma_j, \bp) < f(\sigma_i, \bp)$ for all $\bp \in \tri_1 \setminus e$. Without loss of generality, we assume the former.

\vspace{3mm}

\noindent In this subcase, $\idx_f(\sigma_i, \bp) < \idx_f(\sigma_j, \bp)$ for all $\bp \in \tri_2$ and $\idx_f(\sigma_j, \bp) < \idx_f(\sigma_i, \bp)$ for all $\bp \in \tri_1 \setminus e$. Therefore, the pair $(\sigma_i, \sigma_j)$ swaps along $(v, w)$. The vertex $v$ is detected along $e_2$ and the vertex $w$ is detected along $e_3$, so $(v, w)$ is detected in triangle $\tri_1$. Neither $v$ nor $w$ is detected along any edge of $\tri_2$, so $(v, w)$ is not detected in $\tri_2$.

\vspace{3mm}

\item {\bf Case 2.3:} (see \Cref{fig:subcase_2.3}) $f(\sigma_i, \bp) < f(\sigma_j, \bp)$ for all $\bp \in (\tri_1 \cup \tri_2) \setminus e$.
\vspace{3mm}

\noindent In this subcase, $\idx_f(\sigma_i, \bp) < \idx_f(\sigma_j, \bp)$ for all $\bp \in \tri_1 \cup \tri_2$. Therefore, the pair $(\sigma_i, \sigma_j)$ does not swap along $(v, w)$. Neither $v$ nor $w$ is detected along any edge of $\tri_1$ or $\tri_2$, so $(v, w)$ is not detected in either $\tri_1$ or $\tri_2$.
\end{enumerate}
\end{proof}

We use \Cref{lem:multiple_trans} to modify step 2 of the algorithm: computing the simplex pairing function.

\begin{lemma}\label{lem:multiple_trans}
Let $\idx_0, \idx_1 : \K \to \{1, \ldots, N\}$ denote two different simplex indexings (not necessarily compatible with $f$; see \Cref{def:spx_idx_general}), where $N$ is the number of simplices in $\K$. Let $\{(\sigma_{i_k}, \sigma_{j_k})\}_{k=1}^m$ be the set of pairs $(\sigma_{i_k}, \sigma_{j_k})$ such that
\begin{equation*}
    (\idx_0(\sigma_{i_k})- \idx_0(\sigma_{j_k}))\times(\idx_1(\sigma_{i_k}) - \idx_1(\sigma_{j_k})) < 0\,.
\end{equation*}
That is, $\sigma_{i_k}$ and $\sigma_{j_k}$ have different relative orders in $\idx_0$ and $\idx_1$. Let $\zeta_0:= \idx_0$, and for $k \in \{1, \ldots, m\}$, let $\zeta_k : \K \to \{1, \ldots, N\}$ be the simplex indexing obtained by transposing $(\sigma_{i_k}, \sigma_{j_k})$ in the simplex indexing $\zeta_{k-1}$. If $\zeta_{k-1}(\sigma_{i_k})$ and $\zeta_{k-1}(\sigma_{j_k})$ are consecutive integers for all $k$, then $\zeta_m = \idx_1$. Furthermore, the sequence $\{(\sigma_{i_k}, \sigma_{j_k})\}_{k=1}^m$ can be ordered so that this conditions holds.
\end{lemma}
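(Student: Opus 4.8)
The plan is to think of a simplex indexing as a total order on $\K$ and hence as a permutation relative to a fixed reference order. The pairs $(\sigma_{i_k}, \sigma_{j_k})$ that appear in the statement are exactly the \emph{inversions} between the two total orders $\idx_0$ and $\idx_1$: a pair is listed precisely when $\idx_0$ and $\idx_1$ disagree on their relative order. A standard fact about permutations is that the minimum number of adjacent transpositions needed to sort one permutation into another equals the number of such inversions, so the count $m$ is exactly right; what remains is to show that the inversions can be eliminated one at a time, each time by an \emph{adjacent} transposition (i.e., one that swaps two simplices occupying consecutive positions in the current indexing $\zeta_{k-1}$), and that doing so never creates a new inversion.

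First I would fix notation: identify $\idx_0$ with the identity and let $\pi := \idx_1 \circ \idx_0^{-1}$ be the permutation sending the $\idx_0$-position of a simplex to its $\idx_1$-position. Then the listed pairs are in bijection with the inversions of $\pi$. The key ordering claim is: if $\pi$ is not the identity, there exist two simplices that are \emph{adjacent} in the current order $\zeta_{k-1}$ and that form an inversion relative to $\idx_1$. This is the bubble-sort step: take any position $p$ such that the simplex in position $p$ of $\zeta_{k-1}$ has a larger $\idx_1$-value than the simplex in position $p+1$ (such a $p$ exists unless $\zeta_{k-1} = \idx_1$, since a sequence with no descents relative to $\idx_1$ is already sorted). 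These two simplices are a pair from our set $\{(\sigma_{i_k},\sigma_{j_k})\}$ because they are an inversion relative to $\idx_1$ and — crucially — they have been an inversion relative to $\idx_1$ in \emph{every} intermediate indexing, hence in $\idx_0$ as well; I would verify this by observing that transposing an adjacent inverted pair changes the inversion set by removing exactly that one pair and altering nothing else. Thus every adjacent inverted pair we encounter along the way is genuinely a member of the original list, each member is used exactly once, and after $m$ steps no inversions relative to $\idx_1$ remain, which forces $\zeta_m = \idx_1$.

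I would organize the write-up as: (i) a lemma or direct observation that transposing a pair occupying consecutive positions changes the inversion set (with respect to any fixed target order) by exactly deleting that one pair; (ii) the existence of a descent (adjacent inverted pair) whenever the current order differs from $\idx_1$; (iii) an induction on $m$ combining (i) and (ii) to produce the ordering of $\{(\sigma_{i_k}, \sigma_{j_k})\}_{k=1}^m$ and to conclude $\zeta_m = \idx_1$; and (iv) a remark that the consecutive-position hypothesis in the statement is automatically satisfied by the construction in (ii), so the lemma's conclusion applies. The main obstacle is bookkeeping in step (i): one must be careful that transposing an \emph{adjacent} pair affects only the ordered status of that single pair and of no other pair — this is false for non-adjacent transpositions, which is exactly why the "consecutive integers" hypothesis is needed — and then argue that, because the inversion set only ever loses elements (never gains), each adjacent inverted pair we select was already in the original set $\{(\sigma_{i_k}, \sigma_{j_k})\}$. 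Everything else is routine permutation combinatorics.
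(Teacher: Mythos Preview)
Your proposal is correct and follows essentially the same approach as the paper: both argue by induction on $m$, at each step locating a pair from the inversion set that is adjacent in the current indexing, transposing it, and observing that this removes exactly that one inversion so that the induction hypothesis applies to the remaining $m-1$ pairs. The only cosmetic difference is how the adjacent inverted pair is found: you use the standard ``descent'' observation (a sequence not yet equal to $\idx_1$ must have two consecutive positions out of order relative to $\idx_1$), while the paper argues by contradiction that the inversion pair of minimum $\idx_0$-distance must be adjacent.
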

\begin{proof}
First, we prove that there is at least one pair $(\sigma_{i_k}, \sigma_{j_k})$ such that $\idx_0(\sigma_{i_k})$ and $\idx_0(\sigma_{j_k})$ are consecutive integers. Let $k_* = \argmin_k \norm{\idx_0(\sigma_{i_k}) - \idx_0(\sigma_{j_k})}$. To obtain a contradiction, suppose that $s_1: = \idx_0(\sigma_{i_{k_*}})$ and $s_2:= \idx_0(\sigma_{j_{k_*}})$ are not consecutive integers. Without loss of generality, $s_1 < s_2$. For $r \in \{1, \ldots, s_2 - s_1 -1\}$, let $\tau_{s_1 + r} := \idx^{-1}_0(s_1 + r)$. That is, $\tau_{s_1+1}, \ldots, \tau_{s_2-1}$ are the simplices between $\sigma_{i_{k_*}}$ and $\sigma_{j_{k_*}}$. For all $r$, either $(\sigma_{i_{k_*}}, \tau_{s_1+r}) \in \{(\sigma_{i_k}, \sigma_{j_k})\}_{k=1}^m$ or $(\sigma_{j_{k_*}}, \tau_{s_1+r}) \in \{(\sigma_{i_k}, \sigma_{j_k})\}_{k=1}^m$. That is, either the relative order of $\sigma_{i_{k_*}}$ and $\tau_{s_1+r}$ changes or the relative order of $\sigma_{j_{k_*}}$ and $\tau_{s_1 + r}$ changes. By definition of $k_*$, none of the pairs in the set $\{(\tau_{s_1+r_1}, \tau_{s_1+r_2})\}_{r_1, r_2}$ are in $\{(\sigma_{i_k}, \sigma_{j_k})\}_{k=1}^m$. That is, the relative order of the simplices $\tau_{s_1 + 1}, \ldots, \tau_{s_2 -1}$ does not change. Therefore, either $(\sigma_{i_{k_*}}, \tau_{s_1 + 1}) \in \{(\sigma_{i_k}, \sigma_{j_k})\}_{k=1}^m$ or $(\sigma_{j_{k_*}}, \tau_{s_1 + r}) \in \{(\sigma_{i_k}, \sigma_{j_k})\}_{k=1}^m$ for all $r$. In either case, one of these is a transposition of simplices whose indices in $\idx_0$ are consecutive integers, which is a contradiction.

Now we prove the lemma by induction on $m$. When $m = 1$, we just showed that $\idx_0(\sigma_{i_1})$ and $\idx_0(\sigma_{j_1})$ are consecutive integers, so $\zeta_1 = \idx_1$. In the general case, we can assume $\idx_0(\sigma_{i_1})$ and $\idx_0(\sigma_{j_1})$ are consecutive integers without loss of generality. The set $\{(\sigma_{i_k}, \sigma_{j_k})\}_{k=2}^m$ is the set of pairs $(\sigma_{i_k}, \sigma_{j_k})$ such that
\begin{equation*}
(\zeta_1(\sigma_{i_k}) - \zeta_1(\sigma_{j_k}))\times(\idx_1(\sigma_{i_k}) - \idx_1(\sigma_{j_k})) < 0\,.
\end{equation*}
That is, $\sigma_{i_k}$ and $\sigma_{j_k}$ have different relative orders in $\zeta_1$ and $\idx_1$ for all $k \in \{2, \ldots, m\}$.
By induction, we can assume that the sequence $\{(\sigma_{i_k}, \sigma_{j_k})\}_{k=2}^m$ is ordered such that $\zeta_{k-1}(\sigma_{i_k})$ and $\zeta_{k-1}(\sigma_{j_k})$ are consecutive integers for $k \in \{2, \ldots, m\}$ and $\zeta_m = \idx_1$.
\end{proof}

\subsubsection{Modifications to step 1: Computing the polygons}\label{sec:step1mods}

In \Cref{sec:findpolygons}, we maintained a dictionary $\mathcal{D}_1(\tri)$ for each triangle $\tri \in \base$. The keys were pairs $(\sigma, \tau)$ such that $I(\sigma, \tau) \cap \tri$ was a line segment in $\tri$, and the value of a key $(\sigma, \tau)$ was the list $[v, w]$ of vertices in $\mathcal{A}(L)$ that were the endpoints of the line segment $I(\sigma, \tau) \cap \tri$.

Now we maintain two additional dictionaries $\mathcal{D}_2(\tri)$ and $\mathcal{D}_3(\tri)$ for each triangle $\tri \in \base$. We initialize these dictionaries to be empty, and we update them as we traverse the edges of $\base$. At any time in this process, the keys of $\mathcal{D}_2(\tri)$ are pairs $(v, w)$ of vertices in $\mathcal{A}(L)$ such that 
\begin{enumerate}
    \item the line segment $(v, w)$ has been detected in $\tri$,
    \item the line segment $(v, w)$ is not an edge of $\base$.
\end{enumerate}
The value of $\mathcal{D}_2(\tri)[(v, w)]$ is a list $[(\sigma_{i_1}, \sigma_{j_1}), \ldots, (\sigma_{i_m}, \sigma_{j_m})]$ of the simplex pairs that we have found so far such that $\sigma_{i_k}$ and $\sigma_{j_k}$ swap along $(v, w)$. The keys of $\mathcal{D}_3(\tri)$ are vertices $v \in \mathcal{A}(L)$ such that
\begin{enumerate}
    \item vertex $v$ has been detected along an edge $e$ of triangle $\tri$,
    \item there is a pair $(\sigma, \tau)$ of simplices such that $(\sigma, \tau)$ swaps at $v$ and we have not yet found a vertex $w$ such that $I(\sigma, \tau) \cap \tri = (v, w)$.
\end{enumerate}

We modify the algorithm of \Cref{sec:findpolygons} as follows. Suppose that we detect a vertex $v$ along edge $e$ in $\mathcal{A}(L)$ for the set $\{(\sigma_{i_1}, \sigma_{j_1}), \ldots, (\sigma_{i_m}, \sigma_{j_m})\}$ of simplex pairs. We do the following:
\begin{enumerate}
    \item {\bf Update $\mathcal{D}_1$:} For each triangle $\tri \in \base$ that is adjacent to $e$, we append $v$ to the list of vertices for $\mathcal{D}_1(\tri)[(\sigma_{i_k}, \sigma_{j_k})]$ for all $k$, as in \Cref{sec:findpolygons}.

    \vspace{3mm}
    
    \item {\bf Update $\mathcal{A}(L)$:} If $v$ is not an endpoint of $e$, we split the edge $e$ in $\mathcal{A}(L)$ and add an internal vertex in $e$, as in \Cref{sec:findpolygons}. If $v$ is an endpoint of $e$, we do not split the edge or create a new vertex because $\base$ already has a vertex at $v$.

    \vspace{3mm}
    
    \item {\bf Update $\mathcal{D}_2$, $\mathcal{D}_3$, and the edge labels: } For each triangle $\tri$ that is adjacent to $e$ and each $(\sigma_{i_k}, \sigma_{j_k})$:
    \begin{itemize}
        \item If $v$ is the only vertex in the list $\mathcal{D}_1(\tri)[(\sigma_{i_k}, \sigma_{j_k})]$, then we have not yet detected a line segment for $(\sigma_{i_k}, \sigma_{j_k})$ of the form $(v, w)$ for some vertex $w$. We do the following: If $v$ is not in $\mathcal{D}_3(\tri)$, add the key $v$ to $\mathcal{D}_3(\tri)$ with value $[(\sigma_{i_k}, \sigma_{j_k})]$. Otherwise, append $(\sigma_{i_k}, \sigma_{j_k})$ to $\mathcal{D}_3(\tri)[v]$.
        \item Otherwise, there is another vertex $w \in \mathcal{D}_1(\tri)[(\sigma_{i_k}, \sigma_{j_k})]$. This implies that we have just detected a line segment $(v, w)$ in $\tri$ for $(\sigma_{i_k}, \sigma_{j_k})$. We remove $(\sigma_{i_k}, \sigma_{j_k})$ from $\mathcal{D}_3(\tri)[w]$.
        \begin{itemize}
            \item If $v$ and $w$ are not both vertices of $\base$, then $(v, w)$ is not an edge of $\base$. We do the following: If $(v, w)$ is not in $\mathcal{D}_2(\tri)$, then add key $(v, w)$ to $\mathcal{D}_2(\tri)$ with value $[(\sigma_{i_k}, \sigma_{j_k})]$. Otherwise, append $(\sigma_{i_k}, \sigma_{j_k})$ to $\mathcal{D}_2(\tri)[(v, w)]$.
            \item Otherwise, $v$ and $w$ are both vertices of triangle $\tri$. This means that we have detected a line segment $(v, w)$ in $\tri$ in which $v$ and $w$ are the endpoints of an edge $e'$ in $\base$. If $e'$ is an edge on the boundary of $\base$, then we do nothing. Otherwise, let $\tri_2$ be the other triangle that is adjacent to $e'$. By \Cref{lem:linesegs}, the pair $(\sigma_{i_k}, \sigma_{j_k})$ swaps along $(v, w)$ if and only if the line segment is not detected in $\tri_2$. If $e'$ already stores a reference to $(\sigma_{i_k}, \sigma_{j_k})$, then we remove it because this implies that $e'$ was already detected in $\tri_2$. Otherwise, we add a reference to $(\sigma_{i_k}, \sigma_{j_k})$ on $e'$.
        \end{itemize}
    \end{itemize}
\end{enumerate}
When the traversal of the $1$-skeleton is done, we add lines to $\mathcal{A}(L)$. For every triangle $\tri \in \base$ and every key $(v, w) \in \mathcal{D}_2(\tri)$, we add a line segment with endpoints $v$, $w$ to the DCEL that represents $\mathcal{A}(L)$. For every edge in the DCEL that is a subset of the line segment $(v, w)$, we label the edge with a reference to the list $\mathcal{D}_2(\tri)[(v, w)]$, which is the list $\{(\sigma_{i_1}, \sigma_{j_1}), \ldots, (\sigma_{i_m}, \sigma_{j_m})\}$ of simplex pairs that swap along the line segment.

\subsubsection{Modifications to step 2: Computing the pairing function}\label{sec:step2mods}

We compute a path $\Gamma$ as in \Cref{sec:simplexpairs} and traverse $\Gamma$. At each step, we walk from one polygon $P_1$ to the next polygon $P_2$ by crossing an edge $e$ in $\mathcal{A}(L)$. The edge $e$ stores a list of simplex pairs $(\sigma, \tau)$ such that $\sigma$ and $\tau$ have different relative orders in the polygons $P_1$, $P_2$. We update the simplex indexing one transposition at a time. Let $\overline{\idx}: \K \to \{1, \ldots, N\}$ denote the current indexing, which we initialize to the simplex indexing $\idx_f(\cdot, P_1)$ in $P_1$. While the list that $e$ stores is nonempty, we do the following:
\begin{enumerate}
    \item Let $(\sigma, \tau)$ be the first element of the list.
    \item If $\overline{\idx}(\sigma)$ and $\overline{\idx}(\tau)$ are consecutive integers, then we update $\overline{\idx}$ by swapping the order of $\sigma$ and $\tau$. As in \Cref{sec:simplexpairs}, we also update the RU decomposition, and the (birth, death) simplex pairs. We remove $(\sigma, \tau)$ from the list.
    \item Otherwise, we move $(\sigma, \tau)$ to the end of the list.
\end{enumerate}
At the end of this algorithm, $\overline{\idx}$ is the simplex indexing $\idx_f(\cdot, P_2)$ in $P_2$ (by \Cref{lem:multiple_trans}), the RU decomposition is an RU decomposition for $P_2$, and we have computed the (birth, death) simplex pairs for $P_2$.

\end{document}